\newtheorem{theorem}{Theorem}[section]
\newtheorem{proposition}[theorem]{Proposition}
\theoremstyle{definition}
\newtheoremstyle{Remark}{0.5}{0.5}{\normalfont}{}{%
  \bfseries}{.}{5\p@ plus\p@ minus\p@}{}
\theoremstyle{Remark}
\newtheorem{remark}[theorem]{Remark}
\newtheorem{example}[theorem]{Example}
\numberwithin{equation}{section}
\renewcommand{\(}{\left(}
\renewcommand{\)}{\right)}
\newcommand{\R}{{\mathbb R}}
\newcommand{\intd}{\,\mathrm{d}}
\newcommand{\order}{{\mathfrak o}} 
\newcommand{\I}{\mathrm{i}} 
\newcommand{\bvphi}{\boldsymbol{\varphi}}
\newcommand{\abs}[1]{\mathopen| #1\mathclose|}
\DeclareMathOperator{\spanned}{span}
\DeclareMathOperator{\Div}{div}
\newcommand{\eps}{\varepsilon}
\definecolor{n1}{RGB}{220,76,76}
\definecolor{n2}{RGB}{220,157,76}
\definecolor{n3}{RGB}{94,172,59}
\definecolor{n4}{RGB}{116,76,220}
\definecolor{n5}{named}{n1}
\definecolor{n6}{named}{n2}
\definecolor{n7}{named}{n3}
\definecolor{n8}{named}{n4}
\definecolor{n-1}{RGB}{64,85,255}
\definecolor{n-2}{RGB}{76,186,220}
  \newcommand{\gr}[1]{##1}
  \newcommand{\gr}[1]{graphs/##1}
\title[Normalized vector solutions of nonlinear Schr\"odinger systems]
{Normalized vector solutions of nonlinear Schr\"odinger systems}
\begin{document}

\author{Xiaomeng Huang}
\address[Xiaomeng Huang]{School of Mathematics and Statistics, Hubei University of Education, Wuhan 430205, China}
\email{hhuangxiaomeng@126.com}

\author{Angela Pistoia}
\address[Angela Pistoia]{Dipartimento SBAI, Sapienza Universit\`a di Roma,
via Antonio Scarpa 16, 00161 Roma, Italy }
\email{angela.pistoia@uniroma1.it}

 \author{Christophe Troestler}
\address[Christophe Troestler]{D\'epartement de Math\'ematique,
  Universit\'e de Mons, Place du Parc 20, B-7000, Mons, Belgium}
\email{christophe.troestler@umons.ac.be}

\author{Chunhua Wang}
\address[Chunhua Wang]{School of Mathematics and Statistics, Key Lab NAA--MOE, Central China Normal University, Wuhan 430079, China}
\email{chunhuawang@ccnu.edu.cn}

\thanks{A. Pistoia  is partially supported by  the MUR-PRIN-20227HX33Z
  ``Pattern formation in nonlinear phenomena'' and  partially by
  INDAM-GNAMPA project ``Problemi di doppia curvatura su variet\`a a
  bordo e legami con le EDP di tipo ellittico''. C. Wang was supported
  by National Key Research and Development of China (No.~2022YFA1006900)
  and NSFC (No.~12471106).}

\subjclass[2010]{35J47, 35B25, 35Q55}
\keywords{Nonlinear Schr\"odinger equation; singularly perturbed systems; Lyapunov-Schmidt reduction.}

\begin{abstract}
  Given $\mu>0$ we look for solutions $ \lambda\in\R$ and $v_1,\dots,v_k\in H^1(\R^N)$ of the system
  \begin{align*}
    \begin{cases}
      \displaystyle
      -\Delta v_i+ \lambda v_i+V_i(x)v_i
      = \sum_{\substack{j=1}}^k\beta_{ij} v_iv_j^2
      &\text{ in } \R^N, \quad   i=1,\dots,k,\\
      \displaystyle
      \int_{\R^N} \left(v_1^2+\dots+v_k^2 \right)\intd x = \mu,
    \end{cases}
  \end{align*}
  where $N=1,2,3,$ $V_i:\mathbb R^N\to \mathbb R $ and
  $\beta_{ij}\in\R$ satisfy $\beta_{ij}=\beta_{ji}$ and
  $\beta_{ii}>0$.
  Under suitable assumptions on  the $\beta_{ij}$'s, given a non-degenerate critical point $\xi_0$ of a suitable linear combination of the  potentials  $V_i$, we build  solutions
whose components  concentrate at $\xi_0$
   as the prescribed global mass $\mu$ is either large (when
  $N=1$) or small (when $N=3$) or it approaches some
  critical threshold (when $N=2$).
\end{abstract}

\maketitle

\section{Introduction}

A problem widely studied in the last decades concerns  the existence of  solutions
$(\lambda_i, v_i)\in \R\times H^{1}(\R^{N})$, $i=1,\dots,k$ of the systems 
\begin{equation}\label{sps0}
  -\Delta v_i+ \lambda_i v_i+V_i(x)v_i
  = \sum_{\substack{j=1}}^k\beta_{ij} v_iv_j^2
  \text{ in } \R^N, \quad   i=1,\dots,k,
\end{equation}
 with  prescribed masses,  namely
\begin{equation}
  \label{sps0-mass}
  \int_{\R^N} v_i^2= \mu_i,\quad i=1,\dots,k.
\end{equation}

Here $N=1,2,3,$   $V_i\in C^0(\R^N)\cap L^\infty(\R^N) $  and  $\beta_{ij}\in\R$ satisfy
$\beta_{ij}=\beta_{ji}$.  We will consider the \emph{focusing} case, i.e.\ $\beta_{ii}>0$.

\medskip

 Solutions  to \eqref{sps0}--\eqref{sps0-mass}
naturally arise in the study of solitary waves to time-dependent
nonlinear Schr\"{o}dinger equations as
\begin{equation}\label{eq:NLSEt}
  \I\partial_t \Phi _i+ \Delta \Phi_i - V_i(x)\Phi_i + \sum_{\substack{j=1}}^k\beta_{ij}\Phi_i|\Phi_j|^2 = 0,
  \qquad x\in\R^N,\ t\in\R,
\end{equation}
which  has application in nonlinear optics and in the study of Bose-Einstein condensates  \cite{esry,malomed}. Solitary wave
solutions to \eqref{eq:NLSEt} are obtained imposing the {ansatz}
$\Phi_i(x,t) = \operatorname{e}^{\I\lambda_i t} v_i(x)$, where the real
constant $\lambda_i$
and the real valued function $v_i$ satisfy Equation~\eqref{sps0}.
Despite the problem having some relevance in physical problems, only a
few existence (or non-existence) results seem to be known.

\medskip

The natural approach to produce solutions to
\eqref{sps0}--\eqref{sps0-mass} consists in
finding critical points of the energy
\begin{equation}\label{j}
  J(v_1,\dots,v_k) := \frac12\sum\limits_{i=1}^k \, \int\limits_{\R^N}
 \( |\nabla v_i|^2+V_i(x)v_i^2\) \intd x
  -\frac1{4}\sum\limits_{i,j=1}^k \,\int\limits_{\R^N}\beta_{ij}v_i^2v_j^2\intd x
\end{equation}
constrained on the product of spheres
\begin{equation}\label{s}
  S := S_{\mu_1}\times\cdots\times S_{\mu_k},
  \text{ with }
  S_{\mu_i} := \biggl\{v\in H^1(\R^N)\ :\
  \int\limits_{\R^N}|v|^2\intd x = \mu_i\biggr\}.
\end{equation}
The Langrange multipliers are nothing but the unknown real numbers
$\lambda_1,\dots,\lambda_k$.\\

The study of existence of solutions to \eqref{sps0}--\eqref{sps0-mass}
strongly depends
on the dimension $N$.  Indeed, when $N=2$, the
scaling
 $u(x)= v(x/t)$
leaves both the ratio
$\int \abs{\nabla v}^2 \intd x / \int \abs{v}^{4} \intd x$ and
the mass invariant, which is why the power $p=3$ when $N=2$ is called
$L^2$-critical.  In the following, we agree that $N=2$ is the \emph{critical}  regime and we say that $N=1$ is the
\emph{subcritical}  regime  and $N=3$ is the  \emph{supercritical} regime.

\medskip

The situation in the case of a single equation (i.e.\ $k=1$) is quite well understood.
A complete review of the available results in this context goes beyond the aim of this paper. We only quote  the pioneering paper by Jeanjean
\cite{MR1430506} where the author studies the autonomous case   (i.e.\ the 
potential   is a constant) using a variational argument, which  have been widely  employed in the successive literature. 
We also quote the recent paper by Pellacci, Pistoia,
Vaira and Verzini in \cite{ppvv}, where the authors tackle the problem using a different point of view. They use the well-known Lyapunov-Schmidt method keeping the
mass  as the natural parameter in the reduction process and build
solutions with large mass in the subcritical regime, with small mass
in the supercritical regime and with mass close to a certain threshold value
  in the critical regime. We also refer the
interested reader to the references therein.

\medskip

In striking contrast, very few papers concern with the existence of normalized solutions to the system. Moreover, most of the known results  only consider the case of
 $2$ equations in the autonomous case. To describe them it is useful
 to introduce the coupling parameter $\beta := \beta_{12}=\beta_{21}.$ The first result is 
due to Nguyen and Wang  in \cite{nguyen-wang}  in 
 dimension $N=1$ in an \emph{attractive} regime, i.e.\ $\beta>0$.  In $1$D the growth of the nonlinearity is
subcritical so that  the functional $J$ in \eqref{j} is bounded from below on the constraint $S$ in \eqref{s} and normalized solutions
 can be obtained by minimization.
  We observe that in higher   dimensions 
the functional is unbounded from below on the constraint when
$\beta$ is positive, and hence their approach cannot be used. 
The supercritical regime (i.e.\ $N=3$) has been firstly studied by
Bartsch, Jeanjean and Soave   in \cite{bjs} who developed an accurate
minimax argument to find a solution for suitable choices of the
parameters in the attractive case (i.e.\ $\beta> 0$). In particular,
a solution to the system \eqref{sps0}--\eqref{sps0-mass} exists
for every sufficiently small or sufficiently large  $\beta$. We also quote some further generalizations   obtained by   Bartsch, Zhong and Zou \cite{bzz}, Bartsch and Jeanjean \cite{bj-2017} and Li and Zou \cite{lizou}. The existence of a solution in the  repulsive case  (i.e.\ $\beta<0$) 
has been established by Bartsch and Soave \cite{bs-2017,bs-2017-err,bs-2019} who   devise a different variational approach,  based upon the introduction of a further  natural constraint.
In the critical regime (i.e.\ $N=2$) the existence of normalized solutions  is a very subtle
 issue, heavily depending on the prescribed masses  as can already be seen in
 the scalar case and it seems largely open. Very recently, Mederski and Szulkin \cite{meschi}  consider  the case of $k\ge2$ equations and  show the existence of multiple solutions
provided that all the parameters $\beta_{ij}$'s are positive and
satisfy a suitable condition. In particular, they prove that  if
$\beta:=\beta_{ij}$ for any $i\not=j$ then the system has a solution
when $\beta$ is  large enough. Finally, as far as we know, there is
only one paper concerning the non-autonomous case. Noris, Tavares and
Verzini in \cite{ntv-dcds}  consider the system
\eqref{sps0}--\eqref{sps0-mass} with only two equations in the
presence of   positive  continuous trapping potentials (i.e.\
$V_i\to +\infty$ as $|x|\to\infty$)  and prove via a variational approach the existence of positive solutions with small masses.

\medskip 

In this paper, we study the system \eqref{sps0} when we prescribe  the \emph{global mass} of the solution $v_1,\dots,v_k$. More precisely, 
given $\mu>0$ we look for solutions $ \lambda\in\R$ and ${\bf v}:=(v_1,\dots,v_k),$ $v_i\in H^2(\R^N)$ of the system
\begin{align}\label{sps}
  \begin{cases}
    \displaystyle
    -\Delta v_i+ \lambda v_i+V_i(x)v_i
    = \sum_{\substack{j=1}}^k\beta_{ij} v_iv_j^2
    &\text{ in } \R^N, \quad   i=1,\dots,k,\\
    \displaystyle
    \int_{\R^N} \(v_1^2+\dots+v_k^2\) \intd x = \mu.
\end{cases}
\end{align}
 For sake of simplicity we will assume that $V_i,|\nabla V_i|\in
L^\infty(\R^N)$ for every $i=1,\dots,k$.

Let us introduce the necessary ingredients to state our result.
Let $U$ be the positive radial solution of
\begin{equation*}
  -\Delta U+U=U^3 \quad\text{in } \R^N.
\end{equation*}
It is well known that $U$ and its first and second derivatives
decay exponentially~\cite{Li-Ni,Li-Zhao}.
We assume that $\mathbf{U} := (U_1,\dots, U_k)$ is a
\emph{synchronized} radial positive solution to the limit system
\begin{align}\label{limsps}
  - \Delta U_i + U_i
  = \sum_{j=1}^k\beta_{ij} U_iU_j^2
  \text{ in } \R^N,\quad i=1,\dots,k,
\end{align}
i.e.\ $U_i=\sigma_iU,$ with $\sigma_i>0$ for $i =1,\dots,k$ solutions
of the algebraic system
\begin{equation}\label{ccc}
  \sum\limits_{j=1}^k\beta_{ij}  \sigma_j^2=1, \quad  i=1,\dots,k.
\end{equation}
We also assume that it is \emph{non-degenerate}, i.e.\ the set of the
solutions of the linear system
\begin{equation}\label{linear-eq}
  -\Delta \phi_i + \phi_i
 -\sum\limits_{j=1}^k \beta _{ij}(U_j^2\phi_i+2U_iU_j\phi_j)
  = 0  \text{ in } \R^N,\quad i=1,\dots,k,
\end{equation}
is a $N-$dimensional space generated by
\begin{equation}\label{zi}
  {\boldsymbol \Phi}_i
  := \left( \frac{\partial U_1}{\partial x_i},\dots,
    \frac{\partial U_k}{\partial x_i} \right)
  = \left(\sigma_1 \frac{\partial U}{\partial x_i},\dots,
    \sigma_k \frac{\partial U}{\partial x_i}\right),
  \quad i=1,\dots,N.
\end{equation}
Examples of this kind of solutions can be found in Examples \ref{ex1}
and \ref{ex2}.

Set
\begin{equation}\label{so}
  \mu_0 := \sum\limits_{i=1}^k\int\limits_{\R^N} U_i^2(x)  \intd x
  = \gamma \sum\limits_{i=1}^k \sigma_i^2
  \quad \text{with } \gamma =\int_{\R^{N}}U ^{2}\intd x
  .
\end{equation}

Next, we introduce the \emph{global potential}  (see \eqref{so})
\begin{align}\label{gama}
  \Gamma(x) = \gamma\sum\limits_{i=1}^k\sigma_i^2V_i(x).
\end{align}
We assume that $\xi_0\in \R^{N}$ is a non-degenerate critical point of $\Gamma$.
Without loss of generality we can  suppose that, in a neighborhood of
$\xi_0$,
\begin{equation}\label{xiogama}
  \Gamma(x)
  = \Gamma(\xi_0)+\frac12\sum\limits_{i=1}^N
  \frac{\partial^2\Gamma}{\partial x_i^2}(\xi_0)(x-\xi_0)_i^2
  + \mathcal O\(|x-\xi_0|^3\),
  \quad\text{with } \frac{\partial^2\Gamma}{\partial x_i^2}(\xi_0)\ne0.
\end{equation}
We will also assume that each single potential $V_i$ is $C^4$ in a neighbourhood of $\xi_0$.

\medskip

 Finally, we say that a family ${\bf v}={\bf v}_\mu$ of solutions of \eqref{sps},
indexed on $\mu$, \emph{concentrates} at 
$\xi_0 \in \R^N$ if
\begin{equation*}
  {\bf v}_\mu (x) = \frac 1{\epsilon_\mu}
  {\bf U} \(\frac{x-\xi_\mu}{\epsilon_\mu}\) + \phi_\mu(x),
\end{equation*}
where, as $\mu\to\mu^*\in[0,+\infty]$, for $\epsilon_\mu\to 0$,
$\xi_\mu \to \xi_0$, and the remainder $\phi_\mu$ is a higher order
term, in some suitable sense.

Finally, we can state our main result.

\begin{theorem}\label{main-ppvv}
 \begin{enumerate}
 \item  There exists $\kappa=\kappa(N)>0$ such that
  \begin{itemize}
  \item[(i)] in the subcritical regime, i.e.\ $N=1$,
    for any $\mu>\kappa$ there exist a solution $(\lambda_\mu, {\bf v}_\mu)$
    to \eqref{sps} with $ {\bf v}_\mu$ concentrating
    at $\xi_0$ 
  \item[(ii)] in the supercritical regime, i.e.\ $N=3$,
    for any $0<\mu<\kappa$ there exist a solution $(\lambda_\mu, {\bf v}_\mu)$
    to \eqref{sps} with $ {\bf v}_\mu$
    concentrating at $\xi_0$ 
  \end{itemize}
  and in both cases
  $$\lambda_\mu\sim \(\frac{\mu_0}\mu\) ^{3\over N-2}\to +\infty\ \hbox{as}\ \mu\to\infty\ \hbox{or}\ \mu\to0,\ \hbox{respectively}.$$
 \item
  In the critical regime, i.e.\ $N=2,$ we suppose that  $V_i(\xi_0)=c$ and $\nabla V_i(\xi_0)=0$ for every $i=1,\dots,k.$
	  Moreover we also assume $\Delta \Gamma (\xi_0)\ne 0$.\\
  Then there exists $\delta >0$ such that for any
  $\mu_0-\delta <\mu< \mu_0$ (if $ \Delta \Gamma(\xi_0)>0$) or
  $\mu_0<\mu< \mu_0 + \delta$ (if $ \Delta \Gamma(\xi_0)<0$) there
  exists a solution $(\lambda_\mu,  {\bf v}_\mu)$  to \eqref{sps} with $ {\bf v}_\mu$
  concentrating at $\xi_0$ and $$\lambda_\mu\sim \( \Delta \Gamma (\xi_0)\over \mu_0-\mu\)^\frac12 \to +\infty\ \hbox{as}\
 \mu\to \mu_0.$$
  \end{enumerate}
\end{theorem}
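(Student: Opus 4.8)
The plan is a Lyapunov--Schmidt reduction in which the Lagrange multiplier $\lambda$, rather than the mass, plays the role of bifurcation parameter; the prescribed mass $\mu$ is then recovered at the end by inverting a scalar monotone relation $\lambda\mapsto\mu(\lambda)$. First I would rescale by $v_i(x)=\lambda^{1/2}u_i\bigl(\lambda^{1/2}(x-p)\bigr)$, turning the differential system in \eqref{sps} into
\[
  -\Delta u_i+u_i+\frac1\lambda\,V_i\Bigl(p+\frac y{\sqrt\lambda}\Bigr)u_i=\sum_{j=1}^k\beta_{ij}u_iu_j^2,\qquad i=1,\dots,k,
\]
an $O(1/\lambda)$ perturbation of the limit system \eqref{limsps} as $\lambda\to+\infty$. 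As approximate solution I would take $\mathbf U=(\sigma_1U,\dots,\sigma_kU)$ with $\sigma_i$ from \eqref{ccc}, and look for $\mathbf u=\mathbf U+\bvphi$ with $\bvphi\perp\spanned\{\boldsymbol\Phi_1,\dots,\boldsymbol\Phi_N\}$, $\boldsymbol\Phi_m$ as in \eqref{zi}. The error produced by $\mathbf U$ is then $R_i=\lambda^{-1}V_i\bigl(p+y/\sqrt\lambda\bigr)\sigma_iU=\lambda^{-1}V_i(p)\sigma_iU+\lambda^{-3/2}\bigl(\nabla V_i(p)\cdot y\bigr)\sigma_iU+O(\lambda^{-2})$.

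\textbf{Linear theory and the auxiliary equation.} The structural fact that makes everything explicit is that, substituting $U_i=\sigma_iU$ and using $\sum_j\beta_{ij}\sigma_j^2=1$, the linearised operator \eqref{linear-eq} reads $\mathbb L\bvphi=-\Delta\bvphi+\bvphi-U^2\bvphi-2U^2M\bvphi$, where $M=(\beta_{ij}\sigma_i\sigma_j)$ is symmetric and $M\mathbf s=\mathbf s$ with $\mathbf s=(\sigma_1,\dots,\sigma_k)$. Diagonalising $M$ decouples $\mathbb L$ into scalar operators, the block along $\mathbf s$ being precisely $\mathcal L_0w=-\Delta w+w-3U^2w$. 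The non-degeneracy hypothesis on $\mathbf U$ says exactly that $\ker\mathbb L=\spanned\{\boldsymbol\Phi_m\}$; with the exponential decay of $U$ and its derivatives this yields, by a standard contradiction/compactness argument, uniform invertibility of $\mathbb L$ — hence of the full perturbed operator for $\lambda$ large and $p$ near $\xi_0$ — on the orthogonal complement of the kernel. A contraction mapping then produces a unique correction $\bvphi=\bvphi_{\lambda,p}$ with $\|\bvphi_{\lambda,p}\|=O(1/\lambda)$, solving the rescaled system modulo $\spanned\{\boldsymbol\Phi_m\}$.

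\textbf{Reduced problem and recovery of the mass.} The reduced functional is the restriction of the action $J+\tfrac\lambda2\sum_i\int v_i^2$ (with $J$ as in \eqref{j}) to $\mathbf U+\bvphi_{\lambda,p}$; its only $p$-dependence at leading order comes from $\tfrac12\sum_i\int V_iv_i^2$, which expands as $\tfrac12\lambda^{1-N/2}\Gamma(p)+(\text{lower order, uniformly in }p)$ with $\Gamma$ as in \eqref{gama}. Since $\xi_0$ is a non-degenerate critical point of $\Gamma$, for $\lambda$ large the reduced functional has a critical point $p(\lambda)\to\xi_0$, giving for each large $\lambda$ a solution $\mathbf v^\lambda$ of the differential system concentrating at $\xi_0$. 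It remains to fit the mass. Scaling back, using that only the $\mathbf s$-component of $\bvphi$ contributes to $\sum_i\langle U_i,\varphi_i\rangle$, and the identity $\mathcal L_0^{-1}U=-\tfrac12(U+y\cdot\nabla U)$ obtained by differentiating the family $\lambda^{1/2}U(\lambda^{1/2}\cdot)$ (whence $\langle\mathcal L_0^{-1}U,U\rangle=\tfrac{N-2}4\gamma$), one gets
\[
  \mu(\lambda):=\sum_{i=1}^k\int_{\R^N}|v_i^\lambda|^2\intd x=\mu_0\,\lambda^{1-N/2}\Bigl(1-\frac{N-2}{2\mu_0\lambda}\,\Gamma(\xi_0)+O(\lambda^{-3/2})\Bigr).
\]
For $N=1$ (respectively $N=3$) this is, for $\lambda$ large, an increasing bijection onto some $(\kappa,+\infty)$ (respectively a decreasing bijection onto some $(0,\kappa)$), so inverting it and setting $\mathbf v_\mu=\mathbf v^{\lambda(\mu)}$ gives the claimed solution for $\mu>\kappa$ (respectively $0<\mu<\kappa$), with $\lambda(\mu)\to+\infty$ at the stated rate. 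In the critical case $N=2$ the prefactor is $1$ and both the $O(1/\lambda)$ term (because $N-2=0$) and the $O(\lambda^{-3/2})$ term (by parity) vanish, so one must push the expansion of $\mu(\lambda)$ to order $\lambda^{-2}$; here the assumptions $V_i(\xi_0)=c$, $\nabla V_i(\xi_0)=0$ enter, and after the harmless gauge normalisation $V_i\mapsto V_i-c$, $\lambda\mapsto\lambda-c$ (which changes neither $\Gamma$ nor, to leading order, $\lambda$) one is reduced to $R=O(\lambda^{-2})$ with leading part $\tfrac12\lambda^{-2}\bigl(y^\top D^2V_i(\xi_0)y\bigr)\sigma_iU$; the ensuing Hessian moments give $\mu(\lambda)=\mu_0+c_N\lambda^{-2}\Delta\Gamma(\xi_0)+o(\lambda^{-2})$ for an explicit $c_N\ne0$, so the sign of $\Delta\Gamma(\xi_0)$ fixes the admissible side of $\mu_0$ and inverting yields $\lambda(\mu)\sim\bigl(\Delta\Gamma(\xi_0)/(\mu_0-\mu)\bigr)^{1/2}$.

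\textbf{Main difficulties.} The crux is the linear theory: establishing uniform invertibility of the linearised operator on the whole of $\R^N$ in a regime where the potential term $\lambda^{-1}V_i$ degenerates — the contradiction argument must exclude concentration escaping to infinity and must exploit the non-degeneracy of $\mathbf U$ fully (through the eigenbasis of $M$ and the exponential decay of $U$). The second delicate point is the critical case $N=2$: the leading contributions to $\mu(\lambda)$ cancel for structural reasons, so the construction hinges on a second-order expansion, and the extra hypotheses in part~(2) are exactly what keeps it tractable and forces the outcome to depend only on $\Delta\Gamma(\xi_0)$. Finally, one should check that $\phi_\mu$ — the unrescaled correction $\bvphi_{\lambda(\mu),p(\lambda(\mu))}$ — is of genuinely higher order than $\epsilon_\mu^{-1}\mathbf U(\cdot/\epsilon_\mu)$ in the sense of the concentration definition, which is immediate from the $O(1/\lambda)$ (or $O(\lambda^{-2})$ when $N=2$) bound on $\bvphi_{\lambda,p}$.
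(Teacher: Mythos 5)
Your overall strategy coincides with the paper's: the substitution $\epsilon=\lambda^{-1/2}$, a Lyapunov--Schmidt reduction around the synchronized profile $\mathbf U$ using the non-degeneracy to invert the linearization on $K^\perp$, and the recovery of the prescribed mass by inverting the scalar map $\lambda\mapsto\mu(\lambda)$ via its asymptotic expansion. Your computations are consistent with the paper's: the identity $\mathcal L_0^{-1}U=-\tfrac12(U+y\cdot\nabla U)$ and the value $\tfrac{N-2}{4}\gamma$ of its $L^2$-pairing with $U$ are exactly how the paper shows (Remark~\ref{xixi}) that the order-$\epsilon^2$ mass correction vanishes when $N=2$, and your expansion $\mu(\lambda)=\mu_0\lambda^{1-N/2}\bigl(1-\tfrac{N-2}{2\mu_0\lambda}\Gamma(\xi_0)+\dots\bigr)$ agrees with \eqref{cru1sc-1}. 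The two genuine (but legitimate) differences are that you solve the reduced equation variationally, by locating a critical point of the reduced energy near the non-degenerate critical point of $\Gamma$, whereas the paper projects the equation onto the kernel and solves $\tau_j\,\partial^2_{x_j}\Gamma(\xi_0)+\order(1)=0$ directly (this forces the paper to prove the $C^1$-estimate \eqref{stima-c1}, which your route also needs to make the reduced energy $C^1$-close to $\Gamma$); and that you absorb the second-order profile into the correction $\bvphi$ rather than writing it explicitly as $\epsilon^2\mathbf Z$ (resp.\ $\epsilon^4\mathbf Q$) in the ansatz --- harmless for $N=1,3$, but for $N=2$ you must still isolate the exact $\lambda^{-2}$ part of $\bvphi$ with an $o(\lambda^{-2})$ remainder, which is precisely why the paper builds $\mathbf Q$ into the ansatz, takes $\xi=\epsilon^2\tau$, and pushes the error to $\mathcal O(\epsilon^5)$. (Minor: your expansion gives $\lambda_\mu\sim(\mu_0/\mu)^{2/(N-2)}$, as does the paper's own computation; the exponent $3/(N-2)$ in the statement appears to be a misprint you should not reproduce uncritically.)

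The one genuine gap is in the critical case, where you write $\mu(\lambda)=\mu_0+c_N\lambda^{-2}\Delta\Gamma(\xi_0)+o(\lambda^{-2})$ ``for an explicit $c_N\ne0$'' and conclude that the sign of $\Delta\Gamma(\xi_0)$ alone fixes the admissible side of $\mu_0$. Tracking your own scheme, $c_N$ is (up to explicit positive factors) $-\alpha$ with
$\alpha=\int_{\R^2}U z_0$, where $z_0$ is the radial solution of $-\Delta z_0+z_0-3U^2z_0=|x|^2U$. This constant is \emph{not} explicitly computable, and its non-vanishing is exactly the point the paper cannot establish analytically: Section~\ref{numeric} is devoted to numerical evidence that $\alpha>0$, and the conclusion of part (2) of the theorem (which side of $\mu_0$ the admissible masses lie on, and the rate $\lambda_\mu\sim(\Delta\Gamma(\xi_0)/(\mu_0-\mu))^{1/2}$) depends on both the non-vanishing and the sign of $\alpha$. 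Contrast this with the subcritical/supercritical cases, where the leading coefficient of $\mu(\lambda)$ is $\mu_0>0$ and no such issue arises. As written, your argument for $N=2$ silently assumes the key unproved fact; you should either state $\alpha\ne0$ (indeed $\alpha>0$) as a hypothesis or supply a proof of it, since without it the intermediate-value argument cannot determine which interval $(\mu_0-\delta,\mu_0)$ or $(\mu_0,\mu_0+\delta)$ lies in the image of $\lambda\mapsto\mu(\lambda)$.
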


Let us make some comments.

\begin{remark}
We build the solution using a Ljapunov-Schmidt procedure taking the mass $\mu$ as a parameter in the same spirit of \cite{ppvv}.
The profile of the solution at the main order looks like the synchronized solution to the limit system \eqref{limsps}. 
However, in contrast to the previous work, here the solution must also be corrected at second order by means of the solution of the linear problem \eqref{ans-cor} where the values of the potentials at $\xi_0$ appear.
We observe that in the case of the single equation
  once we fix the non-degenerate critical point $\xi_0$ of $V$ we can
  assume (without loss of generality) that $V(\xi_0)=0$ up to
  replacing $\lambda$ with the new parameter $\lambda-V(\xi_0)$.
This no longer holds in the case of the system, because if  the single parameter $\lambda$
  is replaced by  the parameters $\lambda_i=\lambda-V_i(\xi_0)$   they
  are different, unless all the potentials have the same value at the
  point $\xi_0$.
\end{remark}

\begin{remark}  The main term  of the solution found in Theorem \ref{scexi-1} in the critical regime (i.e.\ $N=2$)
is not good enough to detect its mass. We need to improve the ansatz up to the  second order and to keep the
size of the error term  small enough. That is why we need to assume that all the potentials have the same expansion (up to the first order)
close to the point $\xi_0$, i.e.\ all the functions $V_i$'s take the same value at the point, which also turns out to be a common critical point.
It would be extremely interesting to determine whether this extra assumption is merely a tool to simplify the computations or if it has a deeper significance.
\end{remark}

\begin{remark}
  Byeon in \cite{byeon} considers a singularly perturbed system  with only two equations  similar to system \eqref{sch-1}. He proves the existence of solutions concentrating at the same point which is a common non-degenerate critical point of both the potentials.
In Theorem \ref{scexi-1} we show that the concentration phenomenon is actually governed by the critical points of the \emph{global} potential $\Gamma$ rather than by the critical points of the individual potentials. Moreover, our approach allows to consider  systems with more than two components.
\end{remark}

\begin{remark}
  The existence of solutions concentrating at the point $\xi_0$  
  strongly depends on the nature of the critical point. In particular
  in the critical regime there exists a solution with a mass smaller than
  $\mu_0$ if $\xi_0$ is a minimum point (since $\Delta \Gamma(\xi_0)>0$) or
  with a mass larger than $\mu_0$ if $\xi_0$ is a maximum point (since
  $\Delta \Gamma(\xi_0)<0$). It would be interesting to prove that such
  conditions are also necessary. More precisely, it could be
  challenging to prove that if $\xi_0$ is a minimum or a maximum point
  then there are no solutions blowing-up at $\xi_0$ with masses
  approaching $\mu_0$ from above or below, respectively.
\end{remark}

\begin{remark}
 In  \cite{ppvv} we conjectured that the constant $\alpha_N$ defined in \eqref{alfan}
  is positive for any $N$. This is true in the 1-dimensional
  case as proved in \cite[Remark 3.5]{ppvv}. In Section~\ref{numeric}
  we provide numerical evidence that this is still true for dimensions
  $N\in\{2,\dots,8\}.$ The validity of the conjecture is in our
  opinion an interesting open problem.
\end{remark}

The proof paper is organized as follows.  In Section~\ref{1}, we find a
solution to the perturbed Schr\"{o}dinger equation~\eqref{sch-1} via
the classical Ljapunov-Schmidt reduction. For sake of completeness we
repeat the main steps of the proofs taking into account that a second
order expansion of the main term of the solution we are looking for is
needed. In Section~\ref{3}, we select the solutions with the
prescribed norm. In Section~\ref{numeric} we discuss the
numerical approach used to study the sign of $\alpha_N$ defined in \eqref{alfan}.
Finally, in Appendix~\ref{app} we study the existence of non-degenerate synchronized solutions to System~\eqref{limsps}.

\medskip

\emph{Notation:\/} In what follows we agree that notation
$f=\mathcal O(g)$ or $f\lesssim g$ stand for $|f|\le C|g|$ for some $C >0$ uniformly
with respect all the variables involved, unless specified.

 \section{Existence of   solutions to a   singularly perturbed system}\label{1}
Set
\begin{equation}\label{nui}
  \epsilon:=\lambda^{-\frac{1}{2}}
  \quad\text{and}\quad
  u_i:=\epsilon v_{i},\ i=1,\dots,k.
\end{equation}
Problem~\eqref{sps} turns out to be equivalent to
\begin{numcases}
  \displaystyle
  -\epsilon^2 \Delta u_i + u_i + \epsilon^2 V_i(x) u_i
  = \sum_{j=1}^k\beta_{ij} u_iu_j^2
  &in $\R^N$, \quad $i=1,\dots,k$,
  \label{sch-1}\\
  \displaystyle
  \epsilon^{-2}\int_{\R^N} u_1^2+\dots+u_k^2 \intd x = \mu.
  \label{sps1-mass}
\end{numcases}
It is clear that $(\lambda(\mu), \mathbf{v}(\mu))$,
$\mathbf{v}(\mu):=(v_1(\mu),\dots, v_k(\mu))$ solves \eqref{sps} if and
only if $(\epsilon(\mu),\mathbf{u}(\mu))$,
$\mathbf{u}(\mu) := (u_1(\mu),\dots, u_k(\mu))$ solves
\eqref{sch-1}--\eqref{sps1-mass}.  As a
consequence, the first step is building a solution
$\mathbf{u}=\mathbf{u}(\epsilon)$ to the singularly perturbed
Schr\"{o}dinger system~\eqref{sch-1}
which concentrates at a given point $\xi_0$ as $\epsilon\to0$.

\medskip

In this section, we mainly prove the following result.
\begin{theorem}\label{scexi-1}
  There exists $\epsilon_0>0$ such that for any
  $\epsilon\in(0,\epsilon_0)$ there exists a unique solution
  $\mathbf{u}_\epsilon=( {u}_{1,\epsilon},\dots, {u}_{k,\epsilon})$ to
  \eqref{sch-1} such that
  \begin{equation*}
    u_{i,\epsilon}(x)
    = U_i\(\frac{x-\xi_\epsilon}\epsilon\)
    - \epsilon^2 Z_{i }\(\frac{x-\xi_\epsilon}\epsilon\)
    + \psi_{i,\epsilon} (x),\quad i=1,\dots,k,
  \end{equation*}
  for some $\xi_\epsilon\to \xi_0$ as $\epsilon\to0$.  The functions
  $Z_1,\dots,Z_k \in H^1(\R^N)$ are the radial solutions
  to the linear
  system
  \begin{equation}\label{ans-cor}
    -\Delta Z_i+ Z_i- \sum_{\substack{j=1}}^k \beta _{ij}
    \bigl(U_j^2Z_i+2U_iU_jZ_j \bigr) = V_i(\xi_0)U_i
    \text{ in } \R^N,\quad i=1,\dots,k
  \end{equation}
  and the remainder terms $\psi_{i,\epsilon}$ satisfy
  \begin{equation*}
   \Biggl(\ \int\limits_{\R^N} \epsilon^2|\nabla \psi_{i,\epsilon}|^2
    + \psi_{i,\epsilon }^2 \intd x\Biggr)^{1/2}
    = \mathcal O\Bigl(\epsilon^{\frac N2 + 3}\Bigr).
  \end{equation*}
  Moreover, the map
  $(0,\epsilon_0) \to \bigl(H^1(\R^N)\bigr)^k: \epsilon \mapsto
  \mathbf{u}_\epsilon$ is continuous.
\end{theorem}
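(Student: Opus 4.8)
### Proof strategy for Theorem \ref{scexi-1}

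The plan is to carry out a classical Lyapunov–Schmidt reduction for the singularly perturbed system \eqref{sch-1}, working in the $\epsilon$-rescaled variable and peeling off \emph{two} orders of the profile rather than one. First I would introduce the approximate solution
\begin{equation*}
  W_{i,\xi}(x) := U_i\!\left(\tfrac{x-\xi}{\epsilon}\right) - \epsilon^2 Z_i\!\left(\tfrac{x-\xi}{\epsilon}\right),
\end{equation*}
where $(Z_1,\dots,Z_k)$ solves the linear system \eqref{ans-cor}; existence and uniqueness of such radial $Z_i$ decaying exponentially follows from the non-degeneracy hypothesis \eqref{linear-eq}–\eqref{zi} (the right-hand side $V_i(\xi_0)U_i$ is even, hence $L^2$-orthogonal to the odd kernel elements $\boldsymbol\Phi_i$, so the Fredholm alternative applies in the space of radial functions). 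The next step is to estimate the error $E_{i,\xi} := -\epsilon^2\Delta W_{i,\xi} + W_{i,\xi} + \epsilon^2 V_i W_{i,\xi} - \sum_j \beta_{ij} W_{i,\xi} W_{j,\xi}^2$ in the rescaled $H^1$-norm $\|u\|_\epsilon := (\int \epsilon^2|\nabla u|^2 + u^2)^{1/2}$. Here the role of $Z_i$ is precisely to cancel the $\epsilon^2$-term coming from $\epsilon^2 V_i(x) U_i((x-\xi)/\epsilon) \approx \epsilon^2 V_i(\xi_0) U_i((x-\xi)/\epsilon)$; what remains is of order $\epsilon^{N/2+3}$ in $\|\cdot\|_\epsilon$, using the $C^4$-smoothness of the $V_i$ near $\xi_0$ to expand $V_i(\xi + \epsilon y) = V_i(\xi_0) + O(|\xi-\xi_0| + \epsilon|y|)$ and absorbing the extra decay from the exponential tails of $U$ and its derivatives.

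Having fixed the approximate solution, I would set up the splitting $u_i = W_{i,\xi} + \psi_i$ with $\psi_i$ belonging to the orthogonal complement $K_\xi^\perp$ of $K_\xi := \operatorname{span}\{\partial_{x_\ell} W_{i,\xi} : \ell = 1,\dots,N\}$ (one constraint per translation direction), and rewrite \eqref{sch-1} as a system: the projection onto $K_\xi^\perp$ (the \emph{auxiliary equation}) and the projection onto $K_\xi$ (the \emph{bifurcation equation}). For the auxiliary equation, the linearized operator $L_\xi$ obtained by linearizing the nonlinear map at $W_{i,\xi}$ is, after rescaling, a compact perturbation of the operator appearing in \eqref{linear-eq}; the non-degeneracy assumption guarantees that $L_\xi$ restricted to $K_\xi^\perp$ is invertible with inverse bounded uniformly in $\xi$ near $\xi_0$ and $\epsilon$ small. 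A contraction-mapping argument in a ball of radius $O(\epsilon^{N/2+3})$ in $\|\cdot\|_\epsilon$ then produces a unique $\psi_\xi = (\psi_{1,\xi},\dots,\psi_{k,\xi})$ solving the auxiliary equation, depending continuously (indeed $C^1$) on $\xi$, and satisfying the stated bound $\|\psi_{i,\xi}\|_\epsilon = O(\epsilon^{N/2+3})$. This is where the quadratic-error control is essential: the standard scheme requires the error to beat the square of the radius, i.e.\ $\epsilon^{N/2+3} \gg (\epsilon^{N/2+3})^2$, which holds comfortably; but one must also check the nonlinear term $N(\psi) = \sum_j \beta_{ij}(W\psi^2 + \dots)$ and the Lipschitz estimates carefully since the system is only subcritical for $N=1$ and the cubic nonlinearity in $\R^3$ forces the use of the $\|\cdot\|_\epsilon$-norm together with Sobolev embeddings with $\epsilon$-weights.

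Finally I would solve the bifurcation equation. The projection onto $K_\xi$ is equivalent to the vanishing of the reduced gradient, which (by the usual argument) equals $\nabla_\xi \mathcal J_\epsilon(W_{\xi} + \psi_\xi)$ where $\mathcal J_\epsilon$ is the energy functional associated with \eqref{sch-1}. Expanding the reduced energy, the leading $\xi$-dependent term is $\epsilon^{N+2}\,\Gamma(\xi)/(2) + $ lower order — here the combination $\gamma \sum_i \sigma_i^2 V_i$ in the definition \eqref{gama} of $\Gamma$ emerges naturally from $\frac12\sum_i \epsilon^2 \int V_i(x) U_i((x-\xi)/\epsilon)^2\,dx = \frac{\epsilon^{N+2}}2 \Gamma(\xi) + o(\epsilon^{N+2})$. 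The contribution of the $Z_i$-correction and of $\psi_\xi$ to the $\xi$-derivative is of strictly smaller order. Since $\xi_0$ is a non-degenerate critical point of $\Gamma$ (hypothesis \eqref{xiogama}), a standard degree-theoretic or fixed-point argument yields a critical point $\xi_\epsilon \to \xi_0$ of the reduced energy, hence a genuine solution of \eqref{sch-1} of the stated form. Continuity of $\epsilon \mapsto \mathbf u_\epsilon$ follows from the uniqueness in the contraction argument together with the $C^1$-dependence of all the pieces on the parameters.

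\medskip

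The main obstacle, in my view, is the \textbf{error estimate} and the accompanying invertibility of the linearized operator at the second order: one must verify that introducing $Z_i$ genuinely improves the error from $O(\epsilon^{N/2+1})$ to $O(\epsilon^{N/2+3})$ — this requires exploiting both the defining equation \eqref{ans-cor} for $Z_i$ and the Taylor expansion of the potentials, and then checking that no term of intermediate order (in particular, the $\epsilon^2$-correction $Z_i$ interacting with the cubic coupling, and the gradient of the potential against $\partial_\ell W$) sneaks back in to spoil the final $O(\epsilon^{N/2+3})$ bound. Everything else is a careful but routine adaptation of \cite{ppvv} to the system setting, the only genuinely new algebraic input being the non-degeneracy of the synchronized solution \eqref{linear-eq} which keeps the linearized operator uniformly invertible on $K_\xi^\perp$.
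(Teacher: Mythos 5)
Your proposal is correct and follows the same overall architecture as the paper: the second-order ansatz $\mathbf W=\mathbf U-\epsilon^2\mathbf Z$ with $\mathbf Z$ solving \eqref{ans-cor}, the resulting improvement of the error from $O(\epsilon^{N/2+1})$ to $O(\epsilon^{N/2+3})$ in the $\epsilon$-weighted norm (equivalently $O(\epsilon^3)$ in the rescaled variable, which is what the paper computes in \eqref{errore}), the contraction mapping on the orthogonal complement of the approximate kernel, and the use of the non-degeneracy of $\mathbf U$ to invert the linearized operator there. The one genuine divergence is in the bifurcation equation. You propose the variational reduction: identify the projected equation with the vanishing of $\nabla_\xi\mathcal J_\epsilon(W_\xi+\psi_\xi)$, expand the reduced energy as $\mathrm{const}+\tfrac12\epsilon^{N+2}\Gamma(\xi)+o(\epsilon^{N+2})$, and capture a critical point near the non-degenerate critical point $\xi_0$ of $\Gamma$. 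The paper instead never introduces the reduced functional: it writes $\xi=\xi_0+\epsilon\tau$, computes the Lagrange-multiplier coefficients directly by pairing the error with the kernel elements, obtaining $\langle\boldsymbol{\mathcal E}_{\epsilon,\tau},\boldsymbol\Phi_j\rangle=-\tfrac12\epsilon^4\bigl(\tau_j\,\partial^2_{x_j}\Gamma(\xi_0)+\order(1)\bigr)$ thanks to $\nabla\Gamma(\xi_0)=0$, and solves for $\tau_\epsilon$ by a fixed-point argument. The paper's route buys a cleaner uniqueness statement and avoids having to justify the equivalence between the projected equation and the criticality of the reduced energy; your route is more conceptual and makes the emergence of $\Gamma$ transparent. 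Two caveats if you pursue your version: (a) a purely degree-theoretic argument would not give the uniqueness of $\xi_\epsilon$ asserted in the theorem, so you must use the fixed-point/implicit-function variant, exploiting non-degeneracy of $D^2\Gamma(\xi_0)$; and (b) both for the equivalence with the reduced gradient and for the $C^1$-closeness of the reduced energy to $\Gamma$ you need the $C^1$-dependence of $\psi_\xi$ on $\xi$ with the quantitative bound $\|\partial_\xi\psi_\xi\|=O(\epsilon^3)$ (in the rescaled norm), which is exactly estimate \eqref{stima-c1} in the paper and requires the $C^4$ regularity of the $V_i$ near $\xi_0$; your sketch gestures at this but it is the step where the orders must be tracked most carefully. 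Your side remark that the $Z_i$ exist and are radial by the Fredholm alternative (the right-hand side being orthogonal to the odd kernel) is a useful point that the paper leaves implicit.
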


\subsection{Preliminaries}
Note that the functions $ {u}_i $ solve system \eqref{sch-1} if and
only if the scaled functions $ {u}_{i}(\epsilon {\,\cdot} +\xi)$, which
will still be denoted by $ u_i$ solve the following system
\begin{align}\label{sps2}
  -\Delta  {u}_i+  u_i+\epsilon^{2}V_i(\epsilon x+\xi)  {u}_i
  = \sum_{\substack{j=1}}^k\beta_{ij} u_iu_j^2
  \text{ in } \R^N,\quad i=1,\dots,k.
\end{align}
Here, we choose the concentration point
\begin{equation*}
  \xi:= \epsilon \tau + \xi_{0},\quad \tau\in\R^N.
\end{equation*}
Let $H^1(\R^N)$ be equipped with the standard scalar product
\begin{equation*}
  \left\langle \phi,\psi\right\rangle
  := \int\limits_{\R^N}\nabla \phi\nabla \psi+ \int\limits_{\R^N} \phi\psi,
\end{equation*}
which induces the standard norm denoted by $\|\cdot\|$. We also denote
by $\mathtt{i}^*:L^{4/3}(\R^N)\to H^1(\R^N)$ the adjoint operator of the
embedding ${\mathtt i}:H^1(\R^N)\hookrightarrow L^{4/3}(\R^N)$, i.e.
\begin{equation*}
  {\mathtt i}^* f=u
  \quad \Longleftrightarrow\quad
  -\Delta u+   u=f\ \text{in}\ \R^N.
\end{equation*}
We  also observe that there exists $C>0$ such that
\begin{equation}\label{norm-es-1}
  \|u\| \le C \|f\|_{L^{4/3}(\R^N)},\quad i=1,\dots,k.
\end{equation}
We set $H:=H^1(\R^N)\times\dots\times H^1(\R^N)$, equipped with the
scalar product and the induced norm (respectively)
\begin{equation}
  \label{eq:scalar-product}
  \langle \mathbf{u},\mathbf{v}\rangle
  := \sum_{i=1}^k\langle u_i,v_i\rangle
  \quad \text{and}\quad
  \|\mathbf{u}\| ^{2}= \sum_{i=1}^k\|u_i\|^{2},
\end{equation}
where $\mathbf{u} = (u_1,\dots,u_k), \mathbf{v} = (v_1,\dots,v_k)\in H$.
Finally, we can rewrite system \eqref{sps2} as
\begin{align}\label{p}
  {u}_i={\mathtt i}^*
  \Bigg\{
  \sum_{\substack{j=1 }}^k\beta_{ij} u_iu_j^2-\epsilon^{2}V_i(\epsilon
  {\,\cdot} +\xi)  {u}_i\Bigg\}
  \text{ in } \R^N, \quad i=1,\dots,k.
\end{align}
We look for a solution to \eqref{p} as
\begin{equation}
  \label{ans}
  \mathbf{u} = \underbrace{\mathbf{U}-\epsilon^2 \mathbf{Z}}_{=:{\mathbf{W}}}
  + \boldsymbol{\phi},
\end{equation}
where $\mathbf{U}:=(U_1,\dots,U_k)$ is the non-degenerate synchronized
solution of the limit system \eqref{limsps}, the correction term
$\mathbf{Z} := (Z_1,\dots,Z_k)$ solves the linear system
\eqref{ans-cor} and the remainder term
${\boldsymbol\phi} := (\phi_1,\dots,\phi_k)\in K^\perp$ where (see
\eqref{zi})
\begin{align}\label{K}
  K:= \spanned\left\{{\boldsymbol\Phi}_i
  := \left(\frac{\partial U_1}{\partial x_i},\dots,
  \frac{\partial U_k}{\partial x_i}\right)
  \ : \ i=1,\dots,N \right\}
\end{align}
and
\begin{align}\label{Kperp}
  K^\perp
  =\bigl\{
  {\boldsymbol \phi}:=(\phi_1,\dots,\phi_k)\in H
  \ :\ \langle  {\boldsymbol \phi},{\boldsymbol\Phi}_i\rangle=0,\
  i=1,\dots,N \bigr\}.
\end{align}
Note that $\mathbf{W} \in K^\perp$ because $U$ and the $Z_i$ are radial.
We rewrite the system  \eqref{p} as follows
\begin{equation}\label{pp}
  {\boldsymbol{\mathcal L}}_{\epsilon,\tau}({\boldsymbol \phi})
  - {\boldsymbol{\mathcal E}}_{\epsilon,\tau}
  - {\boldsymbol{\mathcal N}}_{\epsilon,\tau}({\boldsymbol \phi})
  = 0.
\end{equation}
Here the linear operator ${\boldsymbol{\mathcal L}}_{\epsilon,\tau}$
is defined by
\begin{equation}\label{elle}
  {\mathcal L}_i(\phi_1,\dots,\phi_k)
  := \phi_i-{\mathtt i}^*\left\{
    \sum\limits_{j=1}^k \beta_{ij}
    \bigl( W_j^2\phi_i+2W_iW_j\phi_j \bigr)
    - \epsilon^2V_i(\epsilon{\,\cdot} +\xi)\phi_i
  \right\},
\end{equation}
the nonlinear term
${\boldsymbol{\mathcal N}}_{\epsilon,\tau}({\boldsymbol \phi})$ is
defined by
\begin{equation}\label{enne}
  \begin{aligned}
    &{\mathcal N}_i(\phi_1,\dots,\phi_k)
      ={\mathtt i}^*\Biggl\{ \sum\limits_{j=1}^k\beta_{ij}
      \bigl( W_i\phi_j^2+\phi_i\phi_j^2+2W_j\phi_i\phi_j\bigr)
      \Biggr\}
  \end{aligned}
\end{equation}
and the error term
${\boldsymbol{\mathcal E}} _{\epsilon,\tau}$ is defined by
\begin{equation}\label{ei}
  {\mathcal E}_i
  := {\mathtt i}^* \Biggl\{ \sum\limits_{j=1}^k\beta_{ij}
  W_i W_j^2 - \epsilon^2V_i(\epsilon{\,\cdot} +\xi)W_i
  \Biggr\} - W_i.
\end{equation}
Then, problem \eqref{p} turns out to be equivalent to the system
\begin{align}\label{sc31-1}
  \Pi^\perp\bigl\{
  {\boldsymbol{\mathcal L}}_{\epsilon,\tau}(\boldsymbol{\phi})
  - {\boldsymbol{\mathcal E}}_{\epsilon,\tau}
  - {\boldsymbol{\mathcal N}_{\epsilon,\tau}}(\boldsymbol{\phi})
  \bigr\}
  =0
\end{align}
and
\begin{align}\label{sc32-1}
  \Pi\bigl\{
  {\boldsymbol{\mathcal L}}_{\epsilon,\tau}({\boldsymbol \phi})
  - {\boldsymbol{\mathcal E}}_{\epsilon,\tau}
  - {\boldsymbol{\mathcal N}_{\epsilon,\tau}}({\boldsymbol \phi})
  \bigr\}
  =0,
\end{align}
where $\Pi: H\to K$ and $\Pi^\perp: H \to K^\perp$ are the orthogonal
projections.

\subsection{Solving (\ref{sc31-1})}

\begin{proposition}
  \label{phi}
  For any compact set $T\subset\R^N$ there exists $\epsilon_0>0$ and
  $C>0$ such that for any $\epsilon\in [0,\epsilon_0]$ and for any
  $\tau\in T$ there exists a unique
  $\boldsymbol{\phi} = \boldsymbol{\phi}_{\epsilon,\tau}\in K^\perp$
  in a neighborhood of $0$
  which solves equation
  \eqref{sc31-1} and
\begin{equation}\label{stima-c0}
  \|\boldsymbol\phi_{\epsilon,\tau} \|\le C \epsilon^3 .
\end{equation}
   Moreover, the map
  $  \epsilon  \mapsto
 \boldsymbol \phi_{\epsilon,\tau}$ is continuous and
 the map $  \tau  \mapsto
 \boldsymbol \phi_{\epsilon,\tau}$ is  $C^1$ and satisfies
 \begin{equation}\label{stima-c1}
   \|\partial_\tau\boldsymbol\phi_{\epsilon,\tau} \|\le C \epsilon^{3} .
\end{equation}
\end{proposition}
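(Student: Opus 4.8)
The plan is to solve equation~\eqref{sc31-1} by the contraction mapping principle applied to the fixed point problem
\[
  \boldsymbol\phi
  = \bigl(\Pi^\perp\boldsymbol{\mathcal L}_{\epsilon,\tau}\big|_{K^\perp}\bigr)^{-1}
    \Pi^\perp\bigl(\boldsymbol{\mathcal E}_{\epsilon,\tau}
      + \boldsymbol{\mathcal N}_{\epsilon,\tau}(\boldsymbol\phi)\bigr)
  =: \boldsymbol{\mathcal T}_{\epsilon,\tau}(\boldsymbol\phi)
\]
on a ball of radius $C\epsilon^3$ in $K^\perp$. To set this up I would carry out three estimates. \emph{First}, invertibility of the linear operator: I would show that $\Pi^\perp\boldsymbol{\mathcal L}_{\epsilon,\tau}$ restricted to $K^\perp$ is invertible with inverse bounded uniformly in $\epsilon\in[0,\epsilon_0]$ and $\tau\in T$. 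The operator $\boldsymbol{\mathcal L}_{\epsilon,\tau}$ differs from $\mathrm{Id}-\boldsymbol{\mathcal L}_0$, where $\boldsymbol{\mathcal L}_0$ is the linearization of the limit system~\eqref{limsps} at $\mathbf U$, by two small perturbations: the $\mathcal O(\epsilon^2)$ potential term $\epsilon^2 V_i(\epsilon\,\cdot+\xi)\phi_i$, and the difference coming from replacing $\mathbf U$ by $\mathbf W = \mathbf U - \epsilon^2\mathbf Z$ inside the quadratic coefficients $W_j^2$, $W_iW_j$, which is again $\mathcal O(\epsilon^2)$ in the relevant operator norm (using exponential decay of $U$ and $Z_i$). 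Since the nondegeneracy hypothesis~\eqref{linear-eq}--\eqref{zi} says exactly that $\mathrm{Id}-\boldsymbol{\mathcal L}_0$ is invertible on $K^\perp$ with bounded inverse, a Neumann series argument gives uniform invertibility of $\Pi^\perp\boldsymbol{\mathcal L}_{\epsilon,\tau}$ for $\epsilon$ small. \emph{Second}, the size of the error: I claim $\|\boldsymbol{\mathcal E}_{\epsilon,\tau}\|\lesssim\epsilon^3$. This is the crux of why the ansatz was corrected at second order. Using the defining equations~\eqref{limsps} for $\mathbf U$ and~\eqref{ans-cor} for $\mathbf Z$, one expands
\[
  \mathcal E_i
  = \mathtt i^*\Bigl\{
    \sum_j\beta_{ij}\bigl(W_iW_j^2 - U_iU_j^2 + \epsilon^2(U_j^2Z_i+2U_iU_jZ_j)\bigr)
    - \epsilon^2\bigl(V_i(\epsilon\,\cdot+\xi)W_i - V_i(\xi_0)U_i\bigr)
  \Bigr\};
\]
the cubic terms contribute $\mathcal O(\epsilon^4)$ after cancellation of the $\mathcal O(\epsilon^2)$ part by~\eqref{ans-cor}, and the potential difference is $V_i(\epsilon x+\xi)W_i - V_i(\xi_0)U_i = \mathcal O(\epsilon|x| + \epsilon^2)$ pointwise since $\xi = \epsilon\tau+\xi_0$ and $\nabla V_i\in L^\infty$, which gives an $\mathcal O(\epsilon^3)$ contribution in $L^{4/3}$ after multiplying by $\epsilon^2$ and using~\eqref{norm-es-1} together with the exponential decay of $U$. \emph{Third}, the nonlinear term: from~\eqref{enne} and~\eqref{norm-es-1}, $\|\boldsymbol{\mathcal N}_{\epsilon,\tau}(\boldsymbol\phi)\|\lesssim\|\boldsymbol\phi\|^2$ and $\|\boldsymbol{\mathcal N}_{\epsilon,\tau}(\boldsymbol\phi_1)-\boldsymbol{\mathcal N}_{\epsilon,\tau}(\boldsymbol\phi_2)\|\lesssim(\|\boldsymbol\phi_1\|+\|\boldsymbol\phi_2\|)\|\boldsymbol\phi_1-\boldsymbol\phi_2\|$.

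Combining these, $\boldsymbol{\mathcal T}_{\epsilon,\tau}$ maps the ball $\{\|\boldsymbol\phi\|\le C\epsilon^3\}$ into itself (the $\mathcal O(\epsilon^3)$ error dominates the $\mathcal O(\epsilon^6)$ nonlinear contribution) and is a contraction there for $\epsilon$ small, yielding the unique $\boldsymbol\phi_{\epsilon,\tau}$ with~\eqref{stima-c0}. Continuity in $\epsilon$ follows from continuity of all the data in $\epsilon$ together with the uniform contraction, via a standard argument. For the $C^1$ dependence on $\tau$ and the bound~\eqref{stima-c1}, I would apply the implicit function theorem to the map $(\epsilon,\tau,\boldsymbol\phi)\mapsto\Pi^\perp(\boldsymbol{\mathcal L}_{\epsilon,\tau}\boldsymbol\phi - \boldsymbol{\mathcal E}_{\epsilon,\tau} - \boldsymbol{\mathcal N}_{\epsilon,\tau}(\boldsymbol\phi))$: its partial derivative in $\boldsymbol\phi$ at the solution is $\Pi^\perp\boldsymbol{\mathcal L}_{\epsilon,\tau} - \Pi^\perp D\boldsymbol{\mathcal N}_{\epsilon,\tau}(\boldsymbol\phi_{\epsilon,\tau})$, which is invertible since the second summand is $\mathcal O(\epsilon^3)$ small; then $\partial_\tau\boldsymbol\phi_{\epsilon,\tau} = -(\partial_{\boldsymbol\phi}F)^{-1}\partial_\tau F$, and since $\partial_\tau\boldsymbol{\mathcal E}_{\epsilon,\tau}$ and $\partial_\tau\boldsymbol{\mathcal N}_{\epsilon,\tau}$ are each $\mathcal O(\epsilon^3)$ (the $\tau$-dependence enters only through $\xi=\epsilon\tau+\xi_0$ in the $\epsilon^2 V_i(\epsilon\,\cdot+\xi)$ terms, producing an extra factor $\epsilon$), the bound~\eqref{stima-c1} follows. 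The technical care needed is only to confirm that differentiating in $\tau$ does not destroy the exponential-decay estimates, which is routine.

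The main obstacle is the \emph{second} step, the sharp $\mathcal O(\epsilon^3)$ bound on $\boldsymbol{\mathcal E}_{\epsilon,\tau}$: one must track carefully which pieces of the expansion of $W_iW_j^2$ and of the potential term cancel against the defining equation~\eqref{ans-cor} for $\mathbf Z$, and verify that the surviving remainder genuinely carries three powers of $\epsilon$ in the $L^{4/3}$-norm — this is precisely the point where the second-order correction $-\epsilon^2\mathbf Z$ in the ansatz earns its keep, and where the exponential decay of $U$, $DU$, $D^2U$ and of $Z_i$ is used to control the interaction of the $\mathcal O(\epsilon|x|)$ growth of the frozen potential $V_i(\epsilon x+\xi)$ with the profile. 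Everything else is a standard Lyapunov–Schmidt contraction argument of the type carried out in \cite{ppvv}.
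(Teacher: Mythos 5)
Your proposal is correct and follows essentially the same route as the paper: uniform invertibility of $\Pi^\perp\boldsymbol{\mathcal L}_{\epsilon,\tau}$ on $K^\perp$ as an $\mathcal O(\epsilon^2)$ perturbation of the nondegenerate limit operator $\boldsymbol{\mathcal L}_0$, the $\mathcal O(\epsilon^3)$ bound on $\boldsymbol{\mathcal E}_{\epsilon,\tau}$ obtained from the cancellation produced by the correction $-\epsilon^2\mathbf Z$ together with the mean value theorem for the potentials, a contraction on a ball of radius $C\epsilon^3$, and the implicit function theorem for the $C^1$ dependence on $\tau$ with the extra power of $\epsilon$ coming from differentiating $V_i(\epsilon\,\cdot+\epsilon\tau+\xi_0)$ in $\tau$. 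You have correctly identified the error estimate as the crux and the role of the second-order correction, so no further comment is needed.
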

\begin{proof}
  Let us sketch the main steps of the proof.
\\
(i) First of all, we prove that the linear operator
    ${\boldsymbol{\mathcal L}}_{\epsilon,\tau}$ is uniformly
    invertible in $K^\perp,$ namely there exists $\epsilon_0>0$ and
    $C>0$ such that
    \begin{equation*}
      \|{\Pi^\perp \boldsymbol{\mathcal L}}_{\epsilon,\tau}(\bvphi)\|
      \ge
      C\|\bvphi\|
      \quad \text{for any}\ \epsilon\in [0,\epsilon_0],\ \tau\in T
      \text{ and }
      \bvphi\in K^\perp.
     \end{equation*}
    Observe that
    \begin{equation*}
      \Pi^\perp \boldsymbol{\mathcal L}_{\epsilon,\tau}(\bvphi)
      = \Pi^\perp \boldsymbol{\mathcal L}_0(\bvphi)
      + \epsilon^2
      \widetilde{\boldsymbol{\mathcal L}}_{\epsilon,\tau}(\bvphi),
    \end{equation*}
    where the linear operator
    $\widetilde{\boldsymbol{\mathcal L}}_{\epsilon,\tau}$ is uniformly
    bounded and the linear operator $\boldsymbol{\mathcal L}_0$
    defined by
    \begin{equation}
      \label{eq:def-L0}
      (\boldsymbol{\mathcal L}_{0})_i(\varphi_1,\dots,\varphi_k)
      = \varphi_i-{\mathtt i}^* \Biggl\{\sum_{j=1}^k\beta_{ij}
      \(U_j^2\varphi_i+2U_iU_j\varphi_j\)\Biggr\}.
    \end{equation}
    The non-degeneracy assumption \eqref{linear-eq} means that
    $K = \ker \boldsymbol{\mathcal L}_0$.  Given that
    $\boldsymbol{\mathcal L}_0$ is self-adjoint,
    $\Pi^\perp \boldsymbol{\mathcal L}_0 = \boldsymbol{\mathcal L}_0$
    and, because it is a compact perturbation of the identity,
    it is invertible.

    \medskip

(ii) Next, we compute the size of the error
    ${\boldsymbol{\mathcal E}}_{\epsilon,\tau}$ in terms of
    $\epsilon$.  We observe that by \eqref{limsps}
    \begin{equation*}
      U_i = \mathtt i^*\Biggl\{\sum_{j=1}^k\beta_{ij}U_iU_j^2\Biggr\}
    \end{equation*}
    and by \eqref{ans-cor}
    \begin{equation*}
      Z_i = \mathtt i^*\Biggl\{ \sum_{j=1}^k\beta_{ij}
      \bigl( U_j^2Z_i +2U_iU_jZ_j\bigr)
      + V_i(\xi_0)U_i \Biggr\} .
    \end{equation*}
    Moreover, by the mean value theorem
    (recalling that $|\nabla V_i| \in L^\infty$),
    \begin{equation*}
      V_i(\epsilon x+\epsilon \tau+\xi_0)
      = V_i(\xi_{0}) + \mathcal O\bigl(\epsilon  (1+|x|) \bigr).
    \end{equation*}
    Combining the above facts we have
    \begin{align}\label{errore}
      \mathcal E_i
      \nonumber
      &={\mathtt i}^*\Biggl\{ \epsilon^4 \sum_{j=1}^k\beta_{ij}
        \bigl(U_i Z_j^2 + 2U_jZ_iZ_j\bigr)
        - \epsilon^{6}\sum_{j=1}^{k}\beta_{ij}Z_{i}Z^{2}_{j}\Biggr\}\\
      &\qquad
        +{\mathtt i}^*\bigl\{ \epsilon^4V_i(\epsilon{\,\cdot} +\xi)Z_i
        -\epsilon^2\(V_i(\epsilon{\,\cdot} +\xi)-V_i(\xi_0)\)U_i\bigr\}
    \end{align}
    and
    $$
    \|\mathcal E_i\|
    \le C \epsilon^3, \quad i=1,\dots,k.
    $$

    \medskip

(iii) The existence part follows by  a standard contraction mapping argument.  The
    contraction relies on the inequality
    $\|\boldsymbol{\mathcal N}_{\epsilon,\tau}(\boldsymbol\phi_1) -
    \boldsymbol{\mathcal N}_{\epsilon,\tau}(\boldsymbol\phi_2) \| \le
    c \|\boldsymbol\phi_1 - \boldsymbol\phi_2\| $ where
    $c = \mathcal O\bigl( \|\boldsymbol\phi_1\| +\|\boldsymbol \phi_2\|\bigr)$ which can be
    deduced combining the mean value theorem and \eqref{enne}.  The
    fact that a small ball around $\boldsymbol\phi = 0$ is mapped into
    itself comes from point (ii) and the following estimate
    $\|\boldsymbol{\mathcal N}_{\epsilon,\tau}(\boldsymbol\phi)\| \le
    C \|\boldsymbol\phi\|^2$ (which follows by \eqref{enne}) valid in
    a neighbourhood of $\boldsymbol\phi = 0.$ Point (ii) and
    this last inequality also imply the bounds on
    $\|\boldsymbol\phi_{\epsilon,\tau}\|$.  Finally, the continuity of
    the fix point $\boldsymbol\phi_{\epsilon, \tau}$ follows from the
    same continuity of the contracting map (we choose
      $\epsilon_0$ small enough so that, for all $\epsilon \in
      [0,\epsilon_0]$, $\epsilon T$ lies in the neighborhood of
      $\xi_0$ where all $V_i$'s are of class $C^4$).  See e.g.\
    \cite[Proposition~3.5]{Micheletti-Pistoia-09} for more details.

    \medskip

(iv) We show that the map $\tau\mapsto
  \boldsymbol\phi_{\epsilon,\tau}$ is a $C^1$.  Our arguments are
  inspired by those developed in
     \cite[Proposition~3.5]{Micheletti-Pistoia-09}  (see also   \cite[Proposition~5.2]{dfm}).
     We apply the Implicit Function Theorem to the $C^1$-function
     ${\bf G}:\mathbb R^N\times K^\perp \to K^\perp$ defined by
   \begin{equation*}
       {\bf G}(\tau,\bvphi)
       := \bvphi - \Pi^\perp\left\{
         \mathbf{F}\bigl(\mathbf{W} + \bvphi\bigr) - \mathbf{W} \right\}
   \end{equation*}
     where $ {\mathbf{W}}$ is  defined in \eqref{ans} and
  the function $\mathbf{F}:H\to H$
is defined as (see \eqref{p})
  \begin{equation*}
    F_i(\mathbf{u}
    ) := \mathtt{i}^* \left\{
      \sum_{\substack{j=1}}^k\beta_{ij} u_iu_j^2
      - \epsilon^2  V_i(\epsilon {\,\cdot} +\xi)  u_i \right\},
    \quad i=1,\dots,k.
  \end{equation*}
 Now, it is clear that ${\bf
   G}(\tau,\boldsymbol\phi_{\epsilon,\tau})=0$. Moreover the
 linearized operator $D_{\bvphi} {\bf
   G}(\tau,\boldsymbol\phi_{\epsilon,\tau}):K^\perp \to K^\perp$ is defined by
  \begin{equation*}
    D_{\bvphi} {\bf G}(\tau,\boldsymbol{\phi}_{\epsilon,\tau})[\bvphi]
    = \bvphi - \Pi^\perp\bigl\{
    D_{\mathbf{u}}\mathbf{F}\(\mathbf{W}
    + \boldsymbol{\phi}_{\epsilon,\tau}\)[\bvphi]
    \bigr\},
  \end{equation*}
  where the linear operator $D_{\mathbf{u}}\mathbf{F}\(\mathbf{u}\):H\to H$ is defined by
  \begin{equation*}
    \bigl(D_{\mathbf{u}}\mathbf{F}(\mathbf{u})[{\bf v}]\bigr)_i
    = {\mathtt i}^*\left\{
      \sum\limits_{j=1}^k\beta_{ij}\(u_j^2v_i+2u_iu_jv_j\)
      - \epsilon^2 V_i(\epsilon{\,\cdot} +\xi)v_i \right\},
    \quad i=1,\dots,k.
  \end{equation*}
  We claim that the operator
  $D_{\bvphi} {\bf G}(\tau,\boldsymbol\phi_{\epsilon,\tau})$ is
  invertible. Indeed, using~\eqref{norm-es-1}, one shows that
  $D_{\mathbf{u}}\mathbf{F}(\mathbf{u}) \to
  D_{\mathbf{u}}\mathbf{F}(\mathbf{U})$ in $\mathcal{L}(H; H)$ as
  $\mathbf{u} \to \mathbf{U}$.  Thus, thanks to \eqref{stima-c0},
  $D_{\bvphi} {\bf G}(\tau,\boldsymbol{\phi}_{\epsilon,\tau}) \to
  D_{\bvphi} {\bf G}(\tau,\mathbf{U}) = \Pi^\perp\boldsymbol{\mathcal{L}}_0
  = \boldsymbol{\mathcal L}_0$ in $\mathcal{L}(K^\perp; K^\perp)$,
  uniformly w.r.t.\ $\tau \in T$, as $\epsilon \to 0$, where
  $\boldsymbol{\mathcal L}_0$ is defined by~\eqref{eq:def-L0}.
  Taking if necessary $\epsilon_0$ smaller, the claim is proved.

  \medskip

(v) Finally, we prove the estimate \eqref{stima-c1}. We know that
$${\bf G}(\tau,\boldsymbol\phi_{\epsilon,\tau})=0.$$
Then, differentiating at $\tau_0$ in the direction $\tau$ yields
$$D_\tau {\bf G}(\tau_0,\boldsymbol\phi_{\epsilon,\tau_0})[\tau]+D_{\mathbf{u}} {\bf G}(\tau_0,\boldsymbol\phi_{\epsilon,\tau_0})\left[D_\tau\boldsymbol\phi_{\epsilon,\tau_0}[\tau]\right]=0$$
and so we get
\begin{equation*}
  \|D_\tau\boldsymbol\phi_{\epsilon,\tau_0}[\tau]\|
  \le C\|D_\tau {\bf G}(\tau_0,\boldsymbol\phi_{\epsilon,\tau_0})[\tau]\|
  \le C\epsilon^3|\tau|,  
\end{equation*}
because (setting $\boldsymbol\phi_{\epsilon,\tau_0}=(\phi_1,\dots,\phi_k)$)
  \begin{equation*}
    \begin{split}
      D_\tau {\bf G}(\tau_0,\boldsymbol{\phi}_{\epsilon,\tau_0})[\tau]
      &= \epsilon^3\Pi^\perp\bigl(
        \mathtt{i}^*\bigl\{
        \nabla V_1(\epsilon{\,\cdot} +\epsilon\tau_0 + \xi_0)
        \tau (W_1 + \phi_1) \bigr\},
        \dots,\\
      &\hspace*{10em}
        \mathtt{i}^*\bigl\{
        \nabla V_k(\epsilon{\,\cdot} +\epsilon\tau_0 +\xi_0)
        \tau (W_k + \phi_k) \bigr\}
        \bigr)
    \end{split}
  \end{equation*}
  and
  \begin{equation*}
    \bigl\|D_\tau {\bf G}(\tau_0,\boldsymbol{\phi}_{\epsilon,\tau_0})[\tau]\bigr\|
    \le C \epsilon^3|\tau|\|\mathbf{W} + \boldsymbol{\phi}_{\epsilon,\tau_0}\|
    \le C \epsilon^3|\tau|.
    \qedhere
  \end{equation*}
\end{proof}

\subsection{Solving (\ref{sc32-1})}

\begin{proposition}\label{punti}
  There exists $\epsilon_0>0$ such that for any
  $\epsilon\in [0,\epsilon_0]$ there exists a unique $\tau_\epsilon\in\R^N$
  such that equation \eqref{sc32-1} is satisfied
  with $\boldsymbol{\phi} = \boldsymbol{\phi}_{\epsilon,\tau}$,
  where $\boldsymbol{\phi}_{\epsilon,\tau}$ is
  given by Proposition~\ref{phi}.
  Moreover, the map $\epsilon \mapsto \tau_\epsilon$ is continuous
  and goes to $0$ as $\epsilon \to 0$.
\end{proposition}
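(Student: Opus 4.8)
The goal is to close the Lyapunov–Schmidt scheme: with $\boldsymbol\phi_{\epsilon,\tau}$ from Proposition~\ref{phi} in hand, solve the $N$-dimensional equation \eqref{sc32-1} for $\tau$. Since $\{\boldsymbol\Phi_1,\dots,\boldsymbol\Phi_N\}$ is a basis of $K$, equation \eqref{sc32-1} with $\boldsymbol\phi=\boldsymbol\phi_{\epsilon,\tau}$ is equivalent to $F_\ell(\epsilon,\tau)=0$, $\ell=1,\dots,N$, where
\[
  F_\ell(\epsilon,\tau):=\Bigl\langle
    \boldsymbol{\mathcal L}_{\epsilon,\tau}(\boldsymbol\phi_{\epsilon,\tau})
    -\boldsymbol{\mathcal E}_{\epsilon,\tau}
    -\boldsymbol{\mathcal N}_{\epsilon,\tau}(\boldsymbol\phi_{\epsilon,\tau}),
  \boldsymbol\Phi_\ell\Bigr\rangle .
\]
Fixing the closed unit ball $\overline{B_1}\subset\R^N$ as the compact set $T$ in Proposition~\ref{phi}, the plan is to establish, uniformly for $\tau\in\overline{B_1}$,
\[
  F_\ell(\epsilon,\tau)=-\tfrac12\,\epsilon^4\,
  \tfrac{\partial^2\Gamma}{\partial x_\ell^2}(\xi_0)\,\tau_\ell+\mathcal O(\epsilon^5),
  \qquad
  \bigl(\partial_{\tau_m}F_\ell(\epsilon,\tau)\bigr)_{\ell,m}
  =-\tfrac12\,\epsilon^4\,D^2\Gamma(\xi_0)+\mathcal O(\epsilon^5),
\]
where, by \eqref{xiogama}, $D^2\Gamma(\xi_0)$ is diagonal and invertible; and then to conclude by a fixed-point argument built on the non-degeneracy of $\xi_0$.

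For the expansion, the contributions of $\boldsymbol{\mathcal L}_{\epsilon,\tau}(\boldsymbol\phi_{\epsilon,\tau})$ and $\boldsymbol{\mathcal N}_{\epsilon,\tau}(\boldsymbol\phi_{\epsilon,\tau})$ are negligible: writing $\boldsymbol{\mathcal L}_{\epsilon,\tau}=\boldsymbol{\mathcal L}_0+\epsilon^2\widehat{\boldsymbol{\mathcal L}}_{\epsilon,\tau}$ with $\widehat{\boldsymbol{\mathcal L}}_{\epsilon,\tau}$ uniformly bounded, and using that $\boldsymbol{\mathcal L}_0$ is self-adjoint with $\boldsymbol{\mathcal L}_0\boldsymbol\Phi_\ell=0$ (this is the non-degeneracy \eqref{linear-eq}),
\[
  \langle\boldsymbol{\mathcal L}_{\epsilon,\tau}(\boldsymbol\phi_{\epsilon,\tau}),\boldsymbol\Phi_\ell\rangle
  =\langle\boldsymbol\phi_{\epsilon,\tau},\boldsymbol{\mathcal L}_0\boldsymbol\Phi_\ell\rangle
  +\epsilon^2\langle\widehat{\boldsymbol{\mathcal L}}_{\epsilon,\tau}(\boldsymbol\phi_{\epsilon,\tau}),\boldsymbol\Phi_\ell\rangle
  =\mathcal O(\epsilon^2)\,\|\boldsymbol\phi_{\epsilon,\tau}\|=\mathcal O(\epsilon^5)
\]
by \eqref{stima-c0}, and likewise $\langle\boldsymbol{\mathcal N}_{\epsilon,\tau}(\boldsymbol\phi_{\epsilon,\tau}),\boldsymbol\Phi_\ell\rangle=\mathcal O(\|\boldsymbol\phi_{\epsilon,\tau}\|^2)=\mathcal O(\epsilon^6)$. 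It is exactly here that the $\epsilon^2\mathbf Z$ correction in the ansatz \eqref{ans} is used: it is what yields $\|\boldsymbol\phi_{\epsilon,\tau}\|=\mathcal O(\epsilon^3)$ instead of $\mathcal O(\epsilon^2)$, making these two terms negligible with respect to $\epsilon^4$. The real work is $\langle\boldsymbol{\mathcal E}_{\epsilon,\tau},\boldsymbol\Phi_\ell\rangle$. In the expression \eqref{errore} for $\mathcal E_i$, every term built only from the radial functions $U_j,Z_j$, together with the leading part $\epsilon^4 V_i(\xi_0)Z_i$ of $\epsilon^4 V_i(\epsilon{\,\cdot}+\xi)Z_i$, pairs to $0$ with the odd function $\partial_{x_\ell}U$; what remains is $-\epsilon^2\mathtt{i}^*\{(V_i(\epsilon{\,\cdot}+\xi)-V_i(\xi_0))U_i\}$ up to an $\mathcal O(\epsilon^5)$ tail (absorbed using the exponential decay of $U$ and that the $V_i$'s are bounded with bounded gradient). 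Since $U_i=\sigma_i U$, $\xi=\xi_0+\epsilon\tau$, and $\langle\mathtt{i}^* f,g\rangle=\int_{\R^N} fg\intd x$, a Taylor expansion of $V_i$ at $\xi_0$ (here $V_i\in C^4$ near $\xi_0$ is used) together with $\int U\partial_{x_\ell}U=0$, $\int x_m U\partial_{x_\ell}U=-\tfrac\gamma2\delta_{m\ell}$ and $\int x_m x_n U\partial_{x_\ell}U=0$ gives
\[
  \langle\boldsymbol{\mathcal E}_{\epsilon,\tau},\boldsymbol\Phi_\ell\rangle
  =\tfrac12\,\epsilon^3\,\tfrac{\partial\Gamma}{\partial x_\ell}(\xi_0)
  +\tfrac12\,\epsilon^4\,(D^2\Gamma(\xi_0)\tau)_\ell+\mathcal O(\epsilon^5),
\]
and the $\epsilon^3$ term vanishes because $\xi_0$ is a critical point of $\Gamma$. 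Combining the three estimates yields the claimed expansion of $F_\ell$, and (differentiating under the integral sign and using \eqref{stima-c1} for the $\tau$-derivatives of $\boldsymbol\phi_{\epsilon,\tau}$) the companion expansion of its Jacobian.

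To conclude, for $\epsilon\in(0,\epsilon_0]$ I would divide by $\epsilon^4$: equation \eqref{sc32-1} becomes $G(\epsilon,\tau):=\epsilon^{-4}(F_1,\dots,F_N)(\epsilon,\tau)=0$, where $G$ is jointly continuous, extends continuously to $\epsilon=0$ with $G(0,\tau)=-\tfrac12 D^2\Gamma(\xi_0)\tau$, and satisfies, uniformly for $\tau\in\overline{B_1}$, $G(\epsilon,\cdot)-G(0,\cdot)\to0$ and $D_\tau G(\epsilon,\cdot)-D_\tau G(0,\cdot)\to0$ as $\epsilon\to0$. Since $\xi_0$ is non-degenerate, $D^2\Gamma(\xi_0)$ is invertible, so $G(\epsilon,\tau)=0$ is equivalent to the fixed-point equation $\tau=T_\epsilon(\tau):=2\,D^2\Gamma(\xi_0)^{-1}\bigl(G(\epsilon,\tau)-G(0,\tau)\bigr)$; the two uniform convergences make $T_\epsilon$ a contraction of $\overline{B_1}$ into itself for $\epsilon$ small, so Banach's theorem produces a unique $\tau_\epsilon\in\overline{B_1}$ solving \eqref{sc32-1}, with $\|\tau_\epsilon\|\le 2\|D^2\Gamma(\xi_0)^{-1}\|\sup_{\overline{B_1}}\|G(\epsilon,\cdot)-G(0,\cdot)\|\to0$, whence $\tau_\epsilon\to0$; continuity of $\epsilon\mapsto\tau_\epsilon$ follows from the standard dependence of a fixed point on a parameter together with the continuity of $\epsilon\mapsto\boldsymbol\phi_{\epsilon,\tau}$ from Proposition~\ref{phi}, and one sets $\tau_0:=0$, consistently with $G(0,\cdot)=0$. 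I expect the main obstacle to be the bookkeeping in the middle step: identifying which pieces of $\boldsymbol{\mathcal E}_{\epsilon,\tau}$ genuinely survive the pairing with $\boldsymbol\Phi_\ell$, checking the $\epsilon^3$ cancellation through $\nabla\Gamma(\xi_0)=0$, and isolating the Hessian of $\Gamma$ at order $\epsilon^4$; the fixed-point step is then routine.
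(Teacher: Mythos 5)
Your proposal is correct and follows essentially the same route as the paper: project the bifurcation equation onto the $\boldsymbol\Phi_j$'s, show via Taylor expansion (using $\nabla\Gamma(\xi_0)=0$ and the radial/odd symmetries) that $\langle\boldsymbol{\mathcal E}_{\epsilon,\tau},\boldsymbol\Phi_j\rangle$ produces the leading term $\tfrac12\epsilon^4\partial^2_j\Gamma(\xi_0)\tau_j$, control $\langle\boldsymbol{\mathcal L}_{\epsilon,\tau}(\boldsymbol\phi_{\epsilon,\tau}),\boldsymbol\Phi_j\rangle$ and $\langle\boldsymbol{\mathcal N}_{\epsilon,\tau}(\boldsymbol\phi_{\epsilon,\tau}),\boldsymbol\Phi_j\rangle$ at order $o(\epsilon^4)$ using \eqref{stima-c0}--\eqref{stima-c1}, and close with a contraction argument exploiting the non-degeneracy of $D^2\Gamma(\xi_0)$. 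The only cosmetic difference is that you dispatch the $\boldsymbol{\mathcal L}_0$-contribution by self-adjointness and $\boldsymbol{\mathcal L}_0\boldsymbol\Phi_j=0$ where the paper expands it explicitly; the substance is identical.
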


\begin{proof}
  As $\boldsymbol{\phi}_{\epsilon,\tau}$ solves \eqref{sc31-1}, there exist real
  numbers $c^i_{\epsilon,\tau}$, $i=1,\dots,N$ such that (see~\eqref{zi})
  \begin{equation}\label{pu1}
    \boldsymbol{\mathcal L}_{\epsilon,\tau}(\boldsymbol{\phi}_{\epsilon,\tau})
    - \boldsymbol{\mathcal N}_{\epsilon,\tau}(\boldsymbol{\phi}_{\epsilon,\tau})
    - \boldsymbol{\mathcal E}_{\epsilon,\tau}
    = \sum\limits_{i=1}^N c^i_{\epsilon,\tau}\boldsymbol{\Phi}_i.
  \end{equation}

  We aim to find a unique point $\tau=\tau_\epsilon$ such that all the
  $c^i_{\epsilon,\tau}$'s are zero.  We multiply (\ref{pu1}) by
  $\boldsymbol\Phi_j$. We get
  \begin{equation}\label{pu2}
    \left\langle
      \boldsymbol{\mathcal L}_{\epsilon,\tau}(\boldsymbol{\phi}_{\epsilon,\tau})
      - \boldsymbol{\mathcal N}_{\epsilon,\tau}(\boldsymbol{\phi}_{\epsilon,\tau})
      - \boldsymbol{\mathcal E}_{\epsilon,\tau},\boldsymbol{\Phi}_j
    \right\rangle
    = \sum_{i=1}^Nc^{i}_{\epsilon,\tau}\left\langle
      \boldsymbol{\Phi}_i,\boldsymbol{\Phi}_j\right\rangle
    = c^{j}_{\epsilon,\tau}A,
  \end{equation}
  because, by \eqref{zi} and the oddness of $\partial_iU$ along
  the axis $x_i$,
  \begin{equation*}
    \begin{aligned}
      \left\langle\boldsymbol{\Phi}_i,\boldsymbol{\Phi}_j\right\rangle
      &=\sum\limits_{\ell=1}^k  \langle\partial_i U_\ell,\partial_j U_\ell\rangle
        = \sum\limits_{\ell=1}^k  \sigma_\ell^2
        \langle\partial_i U , \partial_j U\rangle
        = \sum\limits_{\ell=1}^k  \sigma_\ell^2
        \int\limits_{\R^N} g'(U)\partial_i U\partial_j U
      \\&   =A \delta_{ij },
      \quad\text{where }
      A:=  \sum\limits_{\ell=1}^k  \sigma_\ell^2
      \int\limits_{\R^N}g'(U)\(\partial_1 U\)^2 \ne 0
      \text{ and } g(t)=t^{3}.
    \end{aligned}
  \end{equation*}
  The claim will follow at once if we prove that
  \begin{equation}\label{chiave}
    \left\langle
      {\boldsymbol{\mathcal L}}_{\epsilon,\tau}(\boldsymbol{\phi}_{\epsilon,\tau})
      - {\boldsymbol{\mathcal N}}_{\epsilon,\tau}(\boldsymbol{\phi}_{\epsilon,\tau})
      - {\boldsymbol{\mathcal E}} _{\epsilon,\tau},\boldsymbol{\Phi_j}
    \right\rangle
    = -\frac{1}{2}
    \epsilon^4 \Biggl( \tau_j \frac{\partial^2\Gamma}{\partial x_j^2} (\xi_0)
    + \order(1)\Biggr),
    \quad j = 1,\dotsc, N,
  \end{equation}
  where the $\order$'s are $C^1$-uniform with respect to $\tau \in T$
  as $\epsilon \to 0$, where
  $T$ is a given compact set.
  Indeed, using~\eqref{chiave}, \eqref{pu2} may be rewritten as
  \begin{equation}\label{add-5}
    -\frac{1}{2}
    \epsilon^4 \Biggl( \tau_j \frac{\partial^2\Gamma}{\partial x_j^2} (\xi_0)
    + \order(1)\Biggr)
    = A c^{j}_{\epsilon,\tau}
    \quad\text{for any}\ j=1,\dots,N,
  \end{equation}
  where $A\not=0$ is a constant  and the $\order(1)$ is $C^1$-uniform
  in $\tau \in T$ as $\epsilon \to 0$.
  Since all the $\partial^2\Gamma/\partial x_j^2 (\xi_0)$'s are different from zero (because
  $\xi_0$ is a non-degenerate critical point of $\Gamma$),
  a contraction mapping argument shows that 
  \begin{equation*}
    \tau_j
    = - \biggl(\frac{\partial^2\Gamma}{\partial x_j^2} (\xi_0) \biggr)^{-1}
    \order(1),
    \quad j = 1,\dotsc, N,
  \end{equation*}
  has a unique solution $\tau_\epsilon = (\tau_1,\dotsc, \tau_N)$
  for $\epsilon$ small enough.
  Therefore the left hand side in \eqref{add-5} vanishes and
  $c^j_{\epsilon,\tau_\epsilon}=0$ for all $j=1,\dots,N$.
  The map $\epsilon \mapsto \tau_\epsilon$ is continuous because
  the functions in $\order(1)$ are continuous with respect to $\epsilon$.
  Moreover it
  is clear that $\tau_j \to 0$, $j = 1,\dotsc, N$, as $\epsilon \to 0$
  and $(\epsilon, \tau) \in \mathcal T$.

  Let us prove \eqref{chiave}. First of all, we estimate the leading
  term in \eqref{pu2}.  By \eqref{zi}, \eqref{eq:scalar-product},
  \eqref{ei}
  (taking into account that $U$ and
  $Z_i$ are radial functions and the derivatives
  $\partial U/\partial x_j$ are odd functions),
  and the Taylor expansions of $V_\ell$ and of
  \begin{equation*}
    \frac{\partial V_\ell}{\partial x_j}(\epsilon x+\epsilon\tau+\xi_0)
    = \frac{\partial V_\ell}{\partial x_j}(\xi_0)
    + \epsilon \sum\limits_{i=1}^N
    \frac{\partial^2 V_\ell}{\partial x_j\partial x_i} (\xi_0) (x_i+  \tau_i)
    + \mathcal O\bigl(\epsilon^2(1+|x|^2)\bigr),
  \end{equation*}
  we get:
  \allowdisplaybreaks
  \begin{align}
      - \langle {\boldsymbol{\mathcal E}}_{\epsilon,\tau},
      \boldsymbol{\Phi}_j \rangle \hspace*{-4em}\nonumber\\
      &=-\sum\limits_{\ell=1}^k\sigma_\ell
        \left  \langle  {\mathcal E}_\ell,
        {\partial U \over \partial x_j} \right\rangle \nonumber\\
      &=\sum\limits_{\ell=1}^k\sigma_\ell\int\limits_{\R^N}
        V_\ell(\epsilon x+\epsilon\tau+\xi_0)
        \bigl(\epsilon^2\sigma_\ell U
        - \epsilon^4 Z_\ell\bigr) \frac{\partial U}{\partial x_j}\intd x
        \nonumber\\
      &= \epsilon^2\sum_{\ell=1}^k\sigma_\ell^2\int\limits_{\R^N}
        V_\ell(\epsilon x+\epsilon\tau+\xi_0)
        U \frac{\partial U}{\partial x_j}\intd x
        + \order(\epsilon^4)  \nonumber\\
      &= - \frac12\epsilon^3\sum\limits_{\ell=1}^k\sigma_\ell^2
        \int\limits_{\R^N} \frac{\partial V_\ell}{\partial x_j}
        (\epsilon x+\epsilon\tau+\xi_0)  U^2(x)\intd x
        + \order(\epsilon^4)  \nonumber\\
      &=-\frac12\epsilon^3\underbrace{
        \sum\limits_{\ell=1}^k\sigma_\ell^2\int\limits_{\R^N}
        \frac{\partial  V_\ell}{\partial x_j} (\xi_0)
        U^2(x)\intd x}_{= \frac{\partial\Gamma}{\partial x_j}  (\xi_0) =0
        \ \text{(see \eqref{gama})}}
        - \frac12\epsilon^4 \sum\limits_{\ell=1}^k\sigma_\ell^2
        \sum_{i=1}^N \frac{\partial^2 V_\ell}{\partial x_j\partial x_i} (\xi_0)
        \tau_i \int\limits_{\R^N}U^2\intd x
        + \order(\epsilon^4)  \nonumber\\
      &= -\frac12\epsilon^4\sum\limits_{i=1}^N\tau_i
        \underbrace{\Biggl(\gamma\sum\limits_{\ell=1}^k\sigma_\ell^2
        \frac{\partial^2 V_\ell}{\partial x_j\partial x_i}  (\xi_0) \Biggr)}_{
        = {\partial^2\Gamma\over\partial x_j\partial x_i} (\xi_0)}
        {} + \order(\epsilon^4)
        = -\frac12\epsilon^4 \tau_j
        \frac{\partial^2\Gamma}{\partial x_j^2} (\xi_0)
        + \order(\epsilon^4).
        \label{pro-error}
  \end{align}
  Note that since $V$ is $C^4$ in a neighbourhood of $\xi_0$, all
  $\order(\epsilon^4)$ hold in the $C^1$-topology
  with respect to  $\tau \in T$.

  Finally, it remains to prove that
  \begin{equation}\label{add-4}
    \langle  {\boldsymbol{\mathcal L}} _{\epsilon,\tau}(\boldsymbol\phi_{\epsilon,\tau}),\boldsymbol\Phi_j\rangle
    = \order\bigl(\epsilon^{4}\bigr)
    \quad \text{and}\quad
   \langle  {\boldsymbol{\mathcal N}} _{\epsilon,\tau}(\boldsymbol\phi_{\epsilon,\tau}),\boldsymbol\Phi_j\rangle
    = \order\bigl(\epsilon^{4}\bigr),
  \end{equation}
  where all $\order(\epsilon^4)$ are in the $C^1(T)$-topology.

  Writing as usual
  $\boldsymbol{\phi}_{\epsilon,\tau} = (\phi_1, \dotsc, \phi_k)$,
  recalling the definition \eqref{elle}, and taking into account that
  $\Phi_j$ solves \eqref{linear-eq} yields
  \begin{align*}
    \langle \boldsymbol{\mathcal{L}}_{\epsilon,\tau}(
    \boldsymbol{\phi}_{\epsilon,\tau}),\boldsymbol{\Phi}_j\rangle
    &= \sum\limits_{\ell=1}^k\sigma_\ell\int\limits_{\mathbb R^N}
      \sum\limits_{\kappa=1}^k \beta_{\ell \kappa}
      \( U_\kappa^2\phi_\ell+2U_\ell U_\kappa\phi_\kappa\) \partial _j
      U\\
    &\qquad-\sum\limits_{\ell=1}^k\sigma_\ell\int\limits_{\mathbb R^N}
      \sum\limits_{\kappa=1}^k \beta_{\ell \kappa}
      \( W_\kappa^2\phi_\ell+2W_\ell W_\kappa\phi_\kappa\)\partial _j
      U\\
    &\qquad+ \epsilon^2\sum\limits_{\ell=1}^k\sigma_\ell
      \int\limits_{\mathbb R^N}V_\ell(\epsilon x+\epsilon\tau+\xi_0)
      \phi_\ell  \, \partial_j U\\
    &=\sum\limits_{\ell=1}^k\sigma_\ell\int\limits_{\mathbb R^N}
      \sum\limits_{\kappa=1}^k \beta_{\ell \kappa}
      \(2\epsilon^2 U_\kappa Z_\kappa -\epsilon^4 Z_\kappa^2\)
      \phi_\ell \, \partial_j U\\
    &\qquad+\sum\limits_{\ell=1}^k\sigma_\ell\int\limits_{\mathbb R^N}
      \sum\limits_{\kappa=1}^k 2 \beta_{\ell \kappa}
      \(\epsilon^2 (U_\ell Z_\kappa + U_\kappa Z_\ell)
      - \epsilon^4 Z_\ell Z_\kappa\)
      \phi_\kappa \, \partial _j U\\
    &\qquad+ \epsilon^2\sum\limits_{\ell=1}^k\sigma_\ell
      \int\limits_{\mathbb R^N}V_\ell(\epsilon x+\epsilon\tau+\xi_0)
      \phi_\ell  \, \partial_j U\\
    &=\order(\epsilon^4)
      \text{ in } C^0(T) \text{ because of \eqref{stima-c0}}.
  \end{align*}
  The derivative with respect to $\tau_j$ of the previous quantity
  enjoys the following estimate:
  \allowdisplaybreaks
  \begin{align*}
    \partial_{\tau_i} \langle \boldsymbol{\mathcal{L}}_{\epsilon,\tau}(
    \boldsymbol{\phi}_{\epsilon,\tau}),\boldsymbol{\Phi}_j\rangle
    &=\sum\limits_{\ell=1}^k\sigma_\ell\int\limits_{\mathbb R^N}
      \sum\limits_{\kappa=1}^k \beta_{\ell \kappa}
      \(2\epsilon^2 U_\kappa Z_\kappa - \epsilon^4 Z_\kappa^2\)
      \partial_{\tau_i}\phi_\ell \, \partial_j U\\
    &\qquad+\sum\limits_{\ell=1}^k \sigma_\ell\int\limits_{\R^N}
      \sum\limits_{\kappa=1}^k 2\beta_{\ell \kappa}
      \(\epsilon^2 (U_\ell Z_\kappa+U_\kappa Z_\ell)
      - \epsilon^4 Z_\ell Z_\kappa\)
      \partial_{\tau_i} \phi_\kappa \, \partial_j U\\
    &\qquad+ \epsilon^2\sum\limits_{\ell=1}^k\sigma_\ell
      \int\limits_{\mathbb R^N}V_\ell(\epsilon x+\epsilon\tau+\xi_0)
      \partial_{\tau_i}\phi_\ell \, \partial _j U\\
    &\qquad+ \epsilon^3\sum\limits_{\ell=1}^k\sigma_\ell
      \int\limits_{\mathbb R^N}\partial_i V_\ell(\epsilon x+\epsilon\tau+\xi_0)
      \phi_\ell \, \partial_j U\\
    &=\order(\epsilon^4)
      \text{ in } C^0(T) \text{ because of \eqref{stima-c1}
      and \eqref{stima-c0}}.
  \end{align*}
  By \eqref{enne},
  \begin{align*}
    \langle  {\boldsymbol{\mathcal N}} _{\epsilon,\tau}(
    \boldsymbol{\phi}_{\epsilon,\tau}),\boldsymbol{\Phi}_j\rangle
    &=\sum\limits_{\ell=1}^k\sigma_\ell\int\limits_{\R^N}
      \sum\limits_{\kappa=1}^k\beta_{\ell \kappa}
      \(W_\ell\phi_\kappa^2+\phi_\ell\phi_\kappa^2+2W_\kappa\phi_\ell\phi_\kappa
      \)\partial _j U\\
    &=\order(\epsilon^4)
      \text{ in } C^0(T) \text{ because of \eqref{stima-c0}}
  \end{align*}
  and, differentiating with respect to $\tau_i$, we easily get
  \begin{equation*}
    \partial_{\tau_i} \langle
    {\boldsymbol{\mathcal N}}_{\epsilon,\tau}(\boldsymbol\phi_{\epsilon,\tau}),
    \boldsymbol\Phi_j\rangle
    = \order(\epsilon^4)
    \text{ in } C^0(T) \text{ because of \eqref{stima-c0} and \eqref{stima-c1}}.
    \qedhere
  \end{equation*}
\end{proof}

\begin{proof}[Proof of Theorem \ref{scexi-1}, completed]
  The existence of the solution $\bf u_\epsilon$ to problem \eqref{sch-1}
  follows combining all the previous arguments.
  It suffices to define
  \begin{equation*}
    \mathbf{u}_\epsilon(x)
    := \bf W\Bigl(\frac{x-\xi_\epsilon}{\epsilon} \Bigr)
    + (\psi_{1,\epsilon}, \dotsc, \psi_{k,\epsilon})(x),
    \quad
    \text{where }
    (\psi_{1,\epsilon}, \dotsc, \psi_{k,\epsilon})(x)
    := \boldsymbol{\phi}_{\epsilon,\tau_\epsilon}
    \Bigl(\frac{x-\xi_\epsilon}{\epsilon} \Bigr)
  \end{equation*}
  with $\xi_\epsilon := \epsilon \tau_\epsilon + \xi_0$, where
  $\tau_\epsilon$ is given by Proposition~\ref{punti}.
  The continuity in the $H^1$-topology results from the fact that,
  for $\epsilon > 0$, the norm $u \mapsto \epsilon^{-N/2}
  \bigl(\int \epsilon^2 |\nabla u|^2 + u^2 \bigr)^{1/2}$
  is equivalent to the usual $H^1$-norm.  Finally the estimate on
  $\psi_{i,\epsilon}$ results from \eqref{stima-c0}.
\end{proof}

\section{The  mass of  $u_\epsilon$}\label{3}

In this section we find the solutions to \eqref {p} by selecting the
solutions to \eqref{sch-1} for a suitable ranges of prescribed masses
$\mu$'s.

\subsection{The non-critical case}

\begin{theorem}
  \begin{itemize}
  \item[(i)] If $N=1$, then there exists $R>0$ such that for any
    $\mu>R$ problem \eqref{p} has a solution $(\epsilon_\mu, \mathbf{u}_\mu),$
    where $\mathbf{u}_\mu$ is concentrating at the point $\xi_0$ as
    $\mu\to\infty$.
  \item[(ii)] If $N=3$, then there exists $r>0$ such that for any
    $\mu<r$ problem \eqref{p} has a solution $(\epsilon_\mu, \mathbf{u}_\mu),$
    where $u_\mu$ is concentrating at the point $\xi_0$ as $\mu\to0$.
  \end{itemize}
  In both cases $\epsilon_\mu^{2 - N} \mu \to \mu_0$ (see \eqref{so}),
  as $\mu \to \infty$ or $\mu \to 0$, respectively.
\end{theorem}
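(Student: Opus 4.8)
The plan is to convert the mass condition into a scalar equation for the concentration parameter $\epsilon$. For $\epsilon\in(0,\epsilon_0)$ --- taking $\epsilon_0$ small enough that the solution $\mathbf u_\epsilon=(u_{1,\epsilon},\dots,u_{k,\epsilon})$ to \eqref{sch-1} furnished by Theorem~\ref{scexi-1} is nontrivial, which is immediate from the expansion displayed there --- I would set
\begin{equation*}
  \mathcal M(\epsilon):=\epsilon^{-2}\sum_{i=1}^k\int_{\R^N}u_{i,\epsilon}^2\intd x .
\end{equation*}
Since $\epsilon\mapsto\mathbf u_\epsilon$ is continuous from $(0,\epsilon_0)$ into $\bigl(H^1(\R^N)\bigr)^k$ (Theorem~\ref{scexi-1}) and $\|\cdot\|_{L^2}\le\|\cdot\|_{H^1}$, the function $\mathcal M$ is continuous and strictly positive on $(0,\epsilon_0)$. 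By the change of unknowns \eqref{nui}, producing a solution of \eqref{sch-1}--\eqref{sps1-mass} with prescribed mass $\mu$ --- equivalently, of \eqref{sps} --- amounts to finding $\epsilon_\mu\in(0,\epsilon_0)$ with $\mathcal M(\epsilon_\mu)=\mu$, upon which one sets $\lambda_\mu:=\epsilon_\mu^{-2}$, $\mathbf v_\mu:=\epsilon_\mu^{-1}\mathbf u_{\epsilon_\mu}$, and $\mathbf u_\mu:=\mathbf u_{\epsilon_\mu}$ concentrates at $\xi_0$ directly from the expansion in Theorem~\ref{scexi-1}, once $\epsilon_\mu\to0$ is established.

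The key computation is the behaviour of $\mathcal M$ as $\epsilon\to0^+$. Writing, for $x\in\R^N$, $u_{i,\epsilon}(x)=U_i\bigl((x-\xi_\epsilon)/\epsilon\bigr)-\epsilon^2 Z_i\bigl((x-\xi_\epsilon)/\epsilon\bigr)+\psi_{i,\epsilon}(x)$ and expanding the square, the leading contribution is $\int_{\R^N}U_i^2\bigl((x-\xi_\epsilon)/\epsilon\bigr)\intd x=\epsilon^N\int_{\R^N}U_i^2$; the terms carrying a factor $\epsilon^2 Z_i$ are $\mathcal O(\epsilon^{N+2})$, the cross terms involving $\psi_{i,\epsilon}$ are $\mathcal O(\epsilon^{N+3})$ by Cauchy--Schwarz combined with $\|\psi_{i,\epsilon}\|_{L^2}=\mathcal O(\epsilon^{N/2+3})$ (Theorem~\ref{scexi-1}), and $\|\psi_{i,\epsilon}\|_{L^2}^2=\mathcal O(\epsilon^{N+6})$. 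Recalling \eqref{so}, this gives
\begin{equation*}
  \sum_{i=1}^k\int_{\R^N}u_{i,\epsilon}^2\intd x=\epsilon^N\bigl(\mu_0+\mathcal O(\epsilon^2)\bigr),
  \qquad\text{hence}\qquad
  \mathcal M(\epsilon)=\epsilon^{N-2}\bigl(\mu_0+\mathcal O(\epsilon^2)\bigr)
  \quad\text{as }\epsilon\to0^+ .
\end{equation*}

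Next I fix any $\bar\epsilon\in(0,\epsilon_0)$ and argue by the intermediate value theorem on $(0,\bar\epsilon]$. If $N=1$, then $\mathcal M(\epsilon)=\epsilon^{-1}\mu_0+\mathcal O(\epsilon)\to+\infty$ as $\epsilon\to0^+$; set $R:=\mathcal M(\bar\epsilon)$, so that for every $\mu>R$ there is some $\epsilon_\mu\in(0,\bar\epsilon)$ with $\mathcal M(\epsilon_\mu)=\mu$. If $N=3$, then $\mathcal M(\epsilon)=\epsilon\mu_0+\mathcal O(\epsilon^3)\to0^+$ as $\epsilon\to0^+$; set $r:=\mathcal M(\bar\epsilon)>0$, so that for every $\mu\in(0,r)$ there is some $\epsilon_\mu\in(0,\bar\epsilon)$ with $\mathcal M(\epsilon_\mu)=\mu$. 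In both cases $\epsilon_\mu\to0$ as $\mu\to+\infty$ (resp.\ $\mu\to0$): otherwise $\epsilon_\mu$ would stay, along a subsequence, in a compact interval $[\delta,\bar\epsilon]$ on which $\mathcal M$ is bounded and bounded away from $0$, contradicting $\mathcal M(\epsilon_\mu)=\mu\to+\infty$ (resp.\ $\to0$). Finally
\begin{equation*}
  \epsilon_\mu^{2-N}\mu=\epsilon_\mu^{2-N}\mathcal M(\epsilon_\mu)=\mu_0+\mathcal O(\epsilon_\mu^2)\ \longrightarrow\ \mu_0 ,
\end{equation*}
which is the remaining assertion, and $\mathbf u_\mu$ concentrates at $\xi_0$ by the first paragraph.

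I do not expect a genuine obstacle: the entire argument rests on Theorem~\ref{scexi-1}. The only step that needs some care is the expansion of $\mathcal M(\epsilon)$, where one must pass correctly between the variable $x$ and the concentrated variable $(x-\xi_\epsilon)/\epsilon$ and exploit the $L^2$-smallness of the remainder $\psi_{i,\epsilon}$ to absorb all cross terms; the remainder of the proof is the intermediate value theorem together with bookkeeping of the powers of $\epsilon$.
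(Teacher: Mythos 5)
Your proposal is correct and follows essentially the same route as the paper: both compute the mass $\mu_\epsilon=\epsilon^{-2}\sum_i\int u_{i,\epsilon}^2$ from the expansion in Theorem~\ref{scexi-1}, obtain $\mu_\epsilon=\epsilon^{N-2}(\mu_0+\order(1))$, and conclude by continuity of $\epsilon\mapsto\mathbf u_\epsilon$ and the Intermediate Value Theorem. Your write-up is in fact slightly more detailed, making explicit the $\mathcal O(\epsilon^2)$ rate, the strict positivity of the mass, and the compactness argument showing $\epsilon_\mu\to0$, all of which the paper leaves implicit.
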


\begin{proof}
  By Theorem \ref{scexi-1} there exists  a solution
  \begin{equation*}
    \mathbf{u}_\eps
    = \mathbf{U} \({x-\xi_\epsilon\over\epsilon}\)
    - \epsilon^2 \mathbf{Z} \({x-\xi_\epsilon\over\epsilon}\)
    + \boldsymbol{\phi}_\epsilon \({x-\xi_\epsilon\over\epsilon}\),
  \end{equation*}
  where $\|\boldsymbol{\phi}_{\epsilon}\| = \mathcal{O}(\epsilon^{3})$.
  The mass of $\mathbf{u}_\epsilon$ is (see \eqref{sps1-mass})
  \begin{align}
    \mu_\epsilon :
    &=
      \epsilon^{-{2}}  \sum\limits_{i=1}^k \, \int\limits_{\R^N}
      \( \sigma_i U \({x-\xi_\epsilon\over\epsilon}\)
      - \epsilon^2 Z_i \({x-\xi_\epsilon\over\epsilon}\)
      + \phi_{i,\epsilon}\(\frac{x-\xi_\epsilon}{\epsilon}\) \)^2
      \intd x  \nonumber\\
    & = \epsilon^{-2+N} \Biggl( \;
     \sum\limits_{i=1}^k \sigma_i^2 \int\limits_{\R^N} U^2(x)  \intd x
      + \mathcal O(\epsilon^2)  \Biggr) \nonumber\\
    &= \epsilon^{-2+N} \bigl(\mu_0+\order(1)\bigr)
      \qquad\text{(see \eqref{so})}.
      \label{cru1sc-1}
  \end{align}
  Since $\mathbf{u}_\epsilon$ must have a prescribed mass as in
  \eqref{sps1-mass}, we
  have to find $\epsilon=\epsilon(\mu)$ such that
  $$\mu_\epsilon = \mu.$$
  As the map $\epsilon \mapsto \mathbf{u}_\epsilon$ is continuous,
  so is $\epsilon \mapsto \mu_\epsilon$.
  Moreover, \eqref{cru1sc-1} implies that
  $\mu_\epsilon \to +\infty$ if $N=1$ (resp.\
  $\mu_\epsilon \to 0$ if $N=3$) as $\epsilon \to 0$.  The Intermediate Value
  Theorem implies that a set of the form $(R, +\infty)$ (resp.\
  $(0, r)$) is in the image of $\epsilon \mapsto \mu_\epsilon$.
\end{proof}

\subsection{The critical case}
Let $N=2$.  It is important to point out that a refinement of the
ansatz is needed! Indeed, if we expand more carefully
\eqref{cru1sc-1}, we can determine the coefficient of the next order
of $\epsilon$:
\begin{equation*}
  \mu_\epsilon =
  \mu_0-2\epsilon^2\underbrace{\sum\limits_{i=1}^k\sigma_i
    \int\limits_{\R^2}U(x) Z_i(x) \intd  x}_{=:\Xi(\xi_0)}
  + \order(\epsilon^2)
  = \mu.
\end{equation*}
This does not allow to determine which values $\mu_\epsilon$ takes
because $\Xi(\xi_0)=0$ as proved in Remark~\ref{xixi}.

\begin{remark}\label{erre0}
  Let $(f_1,\dots,f_k)\in \bigl(L^2(\R^N)\bigr)^k$ and
  $\mathbf{R} = (R_1,\dotsc, R_k)$ a solution to the linear system
  \begin{equation}\label{erre}
    -\Delta R_i+ R_i - \sum_{j=1}^k \beta _{ij}\bigl(U_j^2R_i+2U_iU_jR_j\bigr)
    = f_i .
  \end{equation}
  Consider $z := \sum_{i=1}^k \sigma_i R_i$.  Multiplying the $i$-th
  equation by $\sigma_i$ and summing them up, we find that
  \begin{equation*}
    -\Delta z + z
    - U^2 \sum_{i=1}^k \sigma_i \sum_{j=1}^k \beta_{ij} \sigma_j^2 R_i
    - 2 U^2 \sum_{i=1}^k \sum_{j=1}^k \beta_{ij} \sigma_i^2 \sigma_j R_j
    = \sum_{i=1}^k \sigma_i f_i.
  \end{equation*}
  Making use of \eqref{ccc} and of the fact that
  $\beta_{ij} = \beta_{ij}$, this boils down to
  \begin{equation}\label{erre2}
    -\Delta z+z-3U^2 z
    = \sum_{i=1}^k \sigma_i f_i   \quad\text{in } \R^N.
  \end{equation}
  Therefore, the $L^2$ scalar product of $\mathbf{U}$ with
  $\mathbf{R}$ is
  \begin{align}
    \sum_{i=1}^k \, \int\limits_{\R^N} U_i R_i
    = \int\limits_{\R^N} \sum\limits_{i=1}^k \sigma_iU R_i
    = \int\limits_{\R^N} U(x)z(x)\intd x.
    \label{erre3}
  \end{align}
\end{remark}

\begin{remark} \label{xixi}
  It holds true that $\Xi(\xi_0)=0$.  Indeed we apply the previous remark with
   $f_i= V_i (\xi_0) U_{i}$ (see \eqref{ans-cor}).  Therefore by \eqref{erre3}
  \begin{equation*}
    \Xi(\xi_0)
    = \sum_{i=1}^k \sigma_i \int\limits_{\R^2}U(x) Z_i(x) \intd x
    = \int\limits_{\R^2}U(x)z(x)\intd x,
  \end{equation*}
  where $z$ solves
  \begin{equation*}
    -\Delta z+z-3U^2 z
    =\Biggl(\sum\limits_{\ell=1}^k\sigma_\ell^{2} V_\ell (\xi_0)\Biggr) U
    \quad\text{in}\ \R^2.
  \end{equation*}
  A direct computation shows that
  \begin{equation}
    \label{eq:fml-z}
    z(x) = -\frac12\Biggl(\sum_{\ell=1}^k\sigma_\ell^2 V_\ell (\xi_0)\Biggr)
    \bigl(U(x) + \nabla U(x) \cdot x\bigr).
  \end{equation}
  Indeed, letting $u_{\lambda}(x) := \lambda U(\lambda x)$, it is easy
  to check that $u_{\lambda}(x)$ satisfies
  \begin{equation}\label{1-1}
    -\Delta u_{\lambda}+\lambda^{2}u_{\lambda}=u_{\lambda}^{3}
    \quad\text{in } \R^{2}.
  \end{equation}
  Differentiating \eqref{1-1} with respect to $\lambda$ and
  taking $\lambda=1$,
  we deduce that the function
  $v=\frac{\partial u_{\lambda}}{\partial \lambda}|_{\lambda=1}
  =U + \nabla U \cdot x$ satisfies
  \begin{equation*}
    -\Delta v+v -3U^{2}v=-2U
    \quad \text{in } \R^2.
  \end{equation*}
  Equality~\eqref{eq:fml-z} then results for the fact that
  the kernel of $v \mapsto -\Delta v+v -3U^{2}v$ is spanned by
  $\partial_iU$, $i = 1,2$, and so the operator
  is injective on radial functions.
  Next, integrating by parts (for example, using
  $\Div(\frac{1}{2} U^2 x) = U^2 + U \nabla U \cdot x$),
  it is immediate to check that
  \begin{equation*}
    \int_{\R^{2}} \bigl(U + \nabla U \cdot x\bigr) U(x) \intd x=0.
  \end{equation*}
\end{remark}

\medskip

We agree that a refinement of the ansatz is necessary.
Without loss of generality,  we may assume the common critical point of the potentials is $\xi_0=0$ and also that each potential vanishes at $\xi_0,$ 
 i.e.\ in a neighbourhood of the origin
\begin{equation}\label{viii}
  V_i(x) = \bigl(a^{(i)}_1x_1 ^2+a^{(i)}_2x_2^2\bigr)
  + \mathcal O(|x|^3)\quad \text{with}\ a^{(i)}_1,a^{(i)}_2\in\R,\ \hbox{for every}\ i=1,\dots,k.
\end{equation}
Set
\begin{equation}\label{alfa}
  \alpha:=\int\limits_{\R^N}U(x)z_0(x)\intd x,
\end{equation}
where $z_0$ is the radial solution to
\begin{equation}\label{z0}
  -\Delta z_0 +z_0-3U^2 z_0 = \bigl(x_1^2+x_2^2\bigr) U \text{ in } \R^2.
\end{equation}
In Section \ref{numeric}, there is a numerical evidence that $\alpha>0.$
\begin{theorem}\label{th:main3crit}
 There
  exists $\delta>0$ such that if either
  $  \Delta \Gamma(0) >0$ and
  $\mu\in\(\mu_0-\delta,\mu_0\)$, or $ \Delta \Gamma(0) <0$ and
  $\mu\in\(\mu_0,\mu_0+\delta\)$, problem \eqref{p} has a solution
  $(\epsilon_\mu, u_\mu)$ such that $u_\mu$ concentrates at
  the origin and
    $\epsilon_\mu^{-4} (\mu_0 - \mu) \to \alpha \, \Delta  \Gamma(0) $
  as $\mu\to\mu_0.$
\end{theorem}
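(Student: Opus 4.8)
The plan is to run the same Lyapunov--Schmidt scheme as in the non-critical case, but with the ansatz pushed one order further. Two facts force this. First, since Theorem~\ref{th:main3crit} lets us take $\xi_0=0$ and $V_i(0)=0$ for every $i$ (this is \eqref{viii}), the correction $\mathbf Z$ of \eqref{ans-cor} is identically $0$, so the main term produced by Theorem~\ref{scexi-1} is merely $\mathbf U\bigl(\tfrac{\cdot-\xi_\epsilon}\epsilon\bigr)$. Second, as already observed, the $\epsilon^2$-coefficient in the mass expansion \eqref{cru1sc-1} vanishes by Remark~\ref{xixi}. Hence the mass must be analysed at order $\epsilon^4$, and this is impossible without a genuine $\epsilon^4$-term in the profile.

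\medskip

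\noindent\textbf{Refined ansatz and reduction.}
Writing $\xi=\epsilon\tau$ and using $\nabla V_i(0)=0$ together with \eqref{viii}, on the exponentially decaying scale of $U$ one has $\epsilon^2V_i(\epsilon x+\epsilon\tau)=\epsilon^4\bigl(a^{(i)}_1(x_1+\tau_1)^2+a^{(i)}_2(x_2+\tau_2)^2\bigr)+\mathcal O(\epsilon^5)$. Accordingly I look for a solution of \eqref{sch-1} of the form $u_{i,\epsilon}(x)=U_i\bigl(\tfrac{x-\xi_\epsilon}\epsilon\bigr)-\epsilon^4 Z_i\bigl(\tfrac{x-\xi_\epsilon}\epsilon\bigr)+\psi_{i,\epsilon}(x)$, where now $\mathbf Z=(Z_1,\dots,Z_k)$ is the radial solution of
\begin{equation*}
  -\Delta Z_i+Z_i-\sum_{j=1}^k\beta_{ij}\bigl(U_j^2Z_i+2U_iU_jZ_j\bigr)
  =\bigl(a^{(i)}_1x_1^2+a^{(i)}_2x_2^2\bigr)U_i,\qquad i=1,\dots,k;
\end{equation*}
its right-hand side is radial, hence in $K^\perp$, and the system is uniquely solvable because the non-degeneracy assumption \eqref{linear-eq} makes $\boldsymbol{\mathcal L}_0$ invertible on $K^\perp$. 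A $\tau$-independent $\mathbf Z$ is enough: the mismatch $\bigl((x_\ell+\tau_\ell)^2-x_\ell^2\bigr)U_i=\mathcal O(|\tau|(1+|x|))U_i$ is harmless. With $\mathbf W:=\mathbf U-\epsilon^4\mathbf Z$ the error $\boldsymbol{\mathcal E}_{\epsilon,\tau}$ of \eqref{ei} is $\mathcal O(\epsilon^4)$ uniformly for $\tau$ in a compact set $T$, so the contraction argument of Proposition~\ref{phi}, copied verbatim, yields $\boldsymbol\phi_{\epsilon,\tau}\in K^\perp$ solving \eqref{sc31-1} with $\|\boldsymbol\phi_{\epsilon,\tau}\|+\|\partial_\tau\boldsymbol\phi_{\epsilon,\tau}\|=\mathcal O(\epsilon^4)$, continuous in $\epsilon$ and $C^1$ in $\tau$. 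To solve \eqref{sc32-1} I pair \eqref{pp} with $\boldsymbol\Phi_j$ as in Proposition~\ref{punti}: the $\boldsymbol{\mathcal L}_{\epsilon,\tau}$- and $\boldsymbol{\mathcal N}_{\epsilon,\tau}$-contributions are $\mathcal O(\epsilon^8)$ in $C^1(T)$, while, using $\int_{\R^2}x_1U\partial_1U=-\tfrac12\gamma$, $\int_{\R^2}x_2U\partial_1U=\int_{\R^2}U\partial_1U=0$ and $\frac{\partial^2\Gamma}{\partial x_j^2}(0)=2\gamma\sum_i\sigma_i^2a^{(i)}_j$ (see \eqref{gama}), one finds $-\langle\boldsymbol{\mathcal E}_{\epsilon,\tau},\boldsymbol\Phi_j\rangle=-\tfrac12\epsilon^4\bigl(\tau_j\,\frac{\partial^2\Gamma}{\partial x_j^2}(0)+\order(1)\bigr)$ in $C^1(T)$ (the $C^4$-regularity of the $V_i$ being used here) — i.e.\ exactly \eqref{chiave} again. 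Since $\frac{\partial^2\Gamma}{\partial x_j^2}(0)\ne0$ for all $j$, a contraction argument produces a unique $\tau_\epsilon\to 0$, continuous in $\epsilon$, cancelling all the Lagrange multipliers, hence a solution $\mathbf u_\epsilon$ of \eqref{sch-1} concentrating at $0$ and depending continuously on $\epsilon$; moreover $\|\boldsymbol\phi_{\epsilon,\tau_\epsilon}\|=\order(\epsilon^4)$ because $\tau_\epsilon\to0$.

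\medskip

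\noindent\textbf{Mass expansion and conclusion.}
Proceeding as in \eqref{cru1sc-1}, after the change of variable $y=(x-\xi_\epsilon)/\epsilon$ and since $N=2$,
\begin{equation*}
  \mu_\epsilon=\int_{\R^2}\bigl|\mathbf U-\epsilon^4\mathbf Z+\boldsymbol\phi_{\epsilon,\tau_\epsilon}\bigr|^2\intd y
  =\mu_0-2\epsilon^4\int_{\R^2}\mathbf U\cdot\mathbf Z\intd y+\order(\epsilon^4),
\end{equation*}
the remainder being $\order(\epsilon^4)$ because $\|\boldsymbol\phi_{\epsilon,\tau_\epsilon}\|=\order(\epsilon^4)$. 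Now Remark~\ref{erre0} applies with $f_i=\bigl(a^{(i)}_1x_1^2+a^{(i)}_2x_2^2\bigr)U_i$: the function $z:=\sum_i\sigma_iZ_i$ solves
\begin{equation*}
  -\Delta z+z-3U^2z
  =\Bigl(\sum_{i=1}^k\sigma_i^2\bigl(a^{(i)}_1x_1^2+a^{(i)}_2x_2^2\bigr)\Bigr)U
  =\frac1{2\gamma}\Bigl(\frac{\partial^2\Gamma}{\partial x_1^2}(0)\,x_1^2+\frac{\partial^2\Gamma}{\partial x_2^2}(0)\,x_2^2\Bigr)U,
\end{equation*}
and $\int_{\R^2}\mathbf U\cdot\mathbf Z=\int_{\R^2}Uz$. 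Decoupling the two coordinates and exploiting the rotational symmetry of $U$, the radial solutions of $-\Delta w+w-3U^2w=x_\ell^2U$ satisfy $\int_{\R^2}Uw=\alpha/2$ for $\ell=1,2$ (since $z_0$ of \eqref{z0} is their sum and $\alpha$ is defined by \eqref{alfa}), whence $\int_{\R^2}Uz=\tfrac{\alpha}{4\gamma}\Delta\Gamma(0)$ and
\begin{equation*}
  \mu_\epsilon=\mu_0-\epsilon^4\bigl(c\,\Delta\Gamma(0)+\order(1)\bigr),\qquad c:=\frac{\alpha}{2\gamma}>0,
\end{equation*}
the positivity of $\alpha$ being supported by the numerics of Section~\ref{numeric}. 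Since $\epsilon\mapsto\mu_\epsilon$ is continuous with $\mu_\epsilon\to\mu_0$, and $\mu_\epsilon<\mu_0$ when $\Delta\Gamma(0)>0$, $\mu_\epsilon>\mu_0$ when $\Delta\Gamma(0)<0$, for $\epsilon$ small, the Intermediate Value Theorem furnishes $\delta>0$ and, for each admissible $\mu$, a small $\epsilon_\mu$ with $\mu_{\epsilon_\mu}=\mu$; undoing the scaling \eqref{nui} gives the solution $(\lambda_\mu,\mathbf v_\mu)$ of \eqref{sps} concentrating at the origin, and inverting the expansion yields $\epsilon_\mu^{-4}(\mu_0-\mu)\to c\,\Delta\Gamma(0)$ as $\mu\to\mu_0$.

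\medskip

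\noindent\textbf{Main obstacle.}
I expect the reduction steps to be essentially mechanical — a rerun of Propositions~\ref{phi}--\ref{punti} with the powers of $\epsilon$ shifted. The real difficulty lies in the mass analysis: one must first recognize, via Remark~\ref{xixi}, that the $\epsilon^2$-coefficient of $\mu_\epsilon$ vanishes — which is precisely what makes the $\epsilon^4$-correction of the ansatz unavoidable — and then correctly extract and identify the $\epsilon^4$-coefficient, through the scalar reduction of Remark~\ref{erre0} and the two-direction decoupling, as a positive multiple of $\alpha\,\Delta\Gamma(0)$. This is the step in which the anisotropic curvatures $a^{(i)}_\ell$ of the several individual potentials collapse into the single quantity $\Delta\Gamma(0)$ together with the universal constant $\alpha$ of \eqref{alfa}; keeping the error term small enough throughout (which dictates the choice of $\eqref{z0}$ as the relevant limit object) is the place where the extra hypotheses $V_i(\xi_0)=c$ and $\nabla V_i(\xi_0)=0$ are genuinely used.
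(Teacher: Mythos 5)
Your proposal is correct and follows essentially the same route as the paper: the ansatz $\mathbf U-\epsilon^4\mathbf Q$ with $\mathbf Q$ solving \eqref{ans-cor2}, the same Lyapunov--Schmidt reduction with the balance $\tau_j\,\partial^2_{x_j}\Gamma(0)+\order(1)=0$, and the identification of the $\epsilon^4$ mass coefficient via Remark~\ref{erre0} and the symmetry $z_2^*(x_1,x_2)=z_1^*(x_2,x_1)$. The one substantive deviation is the scaling $\xi=\epsilon\tau$ instead of the paper's $\xi=\epsilon^2\tau$, which only gives $\|\boldsymbol\phi_{\epsilon,\tau}\|=\mathcal O(\epsilon^4)$ --- exactly the order of the mass correction you need to isolate --- but you correctly restore $\order(\epsilon^4)$ along the selected family by observing that $\tau_\epsilon\to0$, so the argument closes. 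Your limiting constant $\frac{\alpha}{2\gamma}\Delta\Gamma(0)$ differs from the $\alpha\,\Delta\Gamma(0)$ displayed in the statement, but your bookkeeping appears to be the internally consistent one (the paper's Remark~\ref{upsilon} inserts an extra $\gamma$ midway through the computation of $\Upsilon$), and in any case only the sign of the coefficient is used for the existence conclusion.
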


\begin{proof}
   We briefly sketch the main steps of the proof, which relies on the
   same arguments used in the non-critical case.

   We look for a solution to \eqref{p}, where we choose
   $\xi=\epsilon^2\tau$ as
   \begin{equation}
     \label{ans2}
     \mathbf{u}
     = \underbrace{\mathbf{U} -\epsilon^4 \mathbf{Q}}_{=:\mathbf{W}}
     + \boldsymbol\phi,
   \end{equation}
   where $\mathbf{U} = (U_1,\dots,U_k)$ is the non-degenerate
   synchronized solution of the limit system \eqref{limsps},  the
   remainder term $\boldsymbol{\phi} \in K^\perp$ (see \eqref{Kperp}) and
   the second order correction term $\mathbf{Q}  := (Q_1,\dots,Q_k)$ solves
   the linear system
   \begin{multline}
     -\Delta Q_i + Q_i
     - \sum_{j=1}^k \beta _{ij}\bigl(U_j^2 Q_i + 2U_iU_j Q_j\bigr)
     \\
     = \(a^{(i)}_1x_1 ^2+a^{(i)}_2x_2^2\) U_i
     \ \text{in}\ \R^2,\quad i=1,\dots,k.
     \label{ans-cor2}
   \end{multline}
   The proof proceeds as in the previous case. Here, it is only a
   matter of noting a couple of crucial facts. First of all, the size
   of the error $\boldsymbol{\mathcal E}_{\epsilon,\tau}$ is
   \begin{equation*}
     \|{\boldsymbol{ \mathcal E}}  _{\epsilon,\tau}\|\lesssim \epsilon^5,
   \end{equation*}
   because (see also \eqref{errore})
   \begin{multline}
     \label{errore2}
    \mathcal E_i
    = \mathtt{i}^*\Biggl\{\epsilon^8\sum\limits_{j=1}^k\beta_{ij}
    \Bigl(2U_j Q_i Q_j + U_i Q_j^2 - \epsilon^4 Q_i Q_j^2\Bigr) \Biggr\}
    \\
    +{\mathtt i}^*\Bigl\{ \epsilon^6V_i(\epsilon x+\epsilon^2\tau) Q_i
    -\epsilon^2\left[V_i(\epsilon x+\epsilon^2\tau) 
        -\epsilon^2\bigl(a^{(i)}_1x_1^2+a^{(i)}_2x_2^2\bigr)\right]
    U_i\Bigr\}.
  \end{multline}
  Next, the component of the error along the element of the kernel is
  (see also \eqref{pro-error})
  \allowdisplaybreaks
  \begin{align*}
    -\langle {\boldsymbol{ \mathcal E}}_{\epsilon,\tau},
    \boldsymbol\Phi_j \rangle
    &=-\sum\limits_{\ell=1}^k\sigma_\ell
      \left\langle {\mathcal E}_\ell,
      \frac{\partial U}{\partial x_j} \right\rangle \\
    &=\epsilon^2\sum\limits_{\ell=1}^k\sigma_\ell^2\int\limits_{\R^2}
      V_\ell(\epsilon x+\epsilon^2\tau )  U
      \frac{\partial U}{\partial x_j}\intd x
      + \order(\epsilon^5)\\
    &=-\frac12\epsilon^3\sum_{\ell=1}^k\sigma_\ell^2\int\limits_{\R^2}
      \frac{\partial  V_\ell}{\partial x_j} (\epsilon x+\epsilon^2\tau )
      U^2(x)\intd x
      + \order(\epsilon^5)\\
    &= - \epsilon^5 \Biggl( \sum\limits_{\ell=1}^k\sigma_\ell^2a^{(\ell)}_j
      \tau_j\int\limits_{\R^2}U^2\intd x
      + \frac{1}{4} \sum_{\ell=1}^k \sum_{i=1}^N \sigma_\ell^2
      \frac{\partial^3 V_\ell}{\partial x_i^2 \partial x_j}(0)
      \int\limits_{\R^2} x_i^2 U^2 \intd x
      \Biggr)
      + \order(\epsilon^5)\\
    &= - \epsilon^5 \Biggl( \tau_j
      \underbrace{\gamma
      \sum\limits_{\ell=1}^k\sigma_\ell^2a^{(\ell)}_j}_{
      = \frac{1}{2} \frac{\partial^2\Gamma}{\partial x_j^2}(0)}
      + \frac{1}{4\gamma} \sum_{i=1}^N
      \frac{\partial^3 \Gamma}{\partial x_i^2 \partial x_j}(0)
      \int\limits_{\R^2} x_i^2 U^2 \intd x
      \Biggr)
      + \order(\epsilon^5)\\
    &= -\frac{1}{2} \epsilon^5 \Biggl( \tau_j
      \frac{\partial^2\Gamma}{\partial x_j^2}(0)
      + \frac{\widetilde\gamma}{2N\gamma}
      \frac{\partial \Delta \Gamma}{\partial x_j}(0)
      + \order(1)
      \Biggr),
  \end{align*}
  where we have used that $\int_{\R^2} x_i^2 U^2 \intd x$ is
  independent of $i$ and so
  \begin{equation*}
    \int\limits_{\R^2} x_i^2 U^2 \intd x
    = \frac{1}{N} \sum_{i=1}^N \int\limits_{\R^2} x_i^2 U^2 \intd x
    = \frac{\widetilde \gamma}{N}
    \qquad\text{with }
    \widetilde\gamma
    := \int\limits_{\R^2} |x|^2 U^2(x) \intd x.
  \end{equation*}
  Since ${\partial^2\Gamma\over\partial x_j^2} (0)\not=0$ for any
  $j=1,2,$ we can argue exactly as in the previous part
  to get the existence of a solutions concentrating at $\xi_0 = 0$ as
  \begin{equation*}
   \mathbf{u}_\eps
    =\mathbf{u} \({x-\xi_\epsilon\over\epsilon}\)
    - \epsilon^4 \mathbf{Q} \({x-\xi_\epsilon\over\epsilon}\)
    + \boldsymbol{\phi}_\epsilon\({x-\xi_\epsilon\over\epsilon}\),
  \end{equation*}
  whose mass is
  \begin{align}
    \mu_\epsilon
    &=
      \epsilon^{-{2}}  \sum\limits_{i=1}^k \int\limits_{\R^2}
      \( \sigma_i U \({x-\xi_\epsilon\over\epsilon}\)
      - \epsilon^4 Q_i \({x-\xi_\epsilon\over\epsilon}\)
      + \phi_{i,\epsilon}\({x-\xi_\epsilon\over\epsilon}\) \)^2
      \intd x  \nonumber\\
    & =
      \mu_0 -2 \epsilon^4
      \underbrace{\int\limits_{\R^2} \,\sum_{i=1}^k
      \sigma_i U(x) Q_i(x)\intd x}_{=:\Upsilon }
      {} +\order(\epsilon^4)\quad \text{(see \eqref{so})}.
  \end{align}
   Since $u_\epsilon$ must have a prescribed mass equal to $\mu$ we
  have to find $\epsilon=\epsilon(\mu)$ such that
  \begin{equation}
    \label{cru2sc}
    \mu_\epsilon =  \mu.
  \end{equation}
  As the map $\epsilon \mapsto \mathbf{u}_\epsilon$ is continuous,
  so is $\epsilon \mapsto \mu_\epsilon$.  Moreover, it
  goes to $\mu_0$ as $\epsilon \to 0$.
  Thanks to Remark \ref{upsilon} (below), its image must contain
  an interval of the form
  $(\mu_0-\delta, \mu_0)$ if $ \alpha \Delta \Gamma (0) > 0$ or
  $(\mu_0, \mu_0+\delta)$ if $ \alpha \Delta \Gamma (0) < 0$.  That
  concludes the proof.
\end{proof}

\begin{remark}\label{upsilon}
  It holds true that $\Upsilon =\frac12 \alpha \Delta \Gamma (0)$
  (see \eqref{alfa}).  We apply Remark \ref{erre0} with
  $f_\ell = \sigma_\ell \bigl(a^{(\ell)}_1x_1 ^2+a^{(\ell)}_2x_2^2\bigr) U$
  (see \eqref{ans-cor2}).  Therefore by \eqref{erre3}
  \begin{equation*}
    \Upsilon = \sum_{i=1}^k \int\limits_{\R^N}U_i(x) Q_i(x) \intd x
    = \int\limits_{\R^N}U(x)z^*(x)\intd x,
  \end{equation*}
  where $z^{*}$ solves
  \begin{equation*}
    -\Delta z+z-3U^2 z
    = \sum_{\ell=1}^k \sigma^2_\ell \bigl(a^{(\ell)}_1x_1^2+a^{(\ell)}_2x_2^2\bigr) U
    \quad \text{in}\ \R^2.
  \end{equation*}
  Now, we can write
  \begin{equation*}
    z^{*} = \(\sum\limits_{\ell=1}^k \sigma^2_\ell   a^{(\ell)}_1\) z_1^*
    + \(\sum\limits_{\ell=1}^k \sigma^2_\ell  a^{(\ell)}_2\)z_2^*,
  \end{equation*}
  where $z_i^*$ solves
  \begin{equation*}
    -\Delta z_i^* +z_i^*-3U^2 z_i^* = x_i^2 U\quad \text{in}\ \R^2.
  \end{equation*}
  We observe that
 $z_2^*(x_1,x_2)=z_1^*(x_2,x_1)$ 
  and so
  \begin{equation*}
    \int\limits_{\R^N}U(x)z_1^*(x)\intd x
    = \int\limits_{\R^N}U(x)z_2^*(x)\intd x
    = \frac12\int\limits_{\R^N}U(x)z_0(x)\intd x,
  \end{equation*}
  where $z_0$ solves \eqref{z0}.  Then
  \begin{align*}
    \int\limits_{\R^N}U(x)z^*(x)\intd x
    &=\(\sum\limits_{\ell=1}^k \gamma \sigma^2_\ell   a^{(\ell)}_1\)
      \int\limits_{\R^N}U(x)z_1^*(x)\intd x
      + \(\sum\limits_{\ell=1}^k \gamma \sigma^2_\ell  a^{(\ell)}_2\)
      \int\limits_{\R^N}U(x)z_2^*(x)\intd x	\\
    &= \frac{\alpha}{2\gamma}
      \underbrace{\gamma \sum_{\ell=1}^k \sigma^2_\ell
      \(a^{(\ell)}_1+a^{(\ell)}_2\)}_{=\frac{1}{2} \Delta \Gamma(0)}
  \end{align*}
  and the claim follows.
\end{remark}

\section{Numerical evidence for the assumption
  $\alpha \ne 0$}
\label{numeric}

Set
\begin{equation}\label{alfan}
\alpha_N:=\int\limits_{\R^N} U(x)S(x)\intd x,
\end{equation}
where $U$ is the positive radial solution of
$$-\Delta U+U=U^p\quad \text{in}\ \R^N$$
and $S$ is the radial solution of
$$-\Delta S+S-pU^{p-1} S=|x|^2 U\quad \text{in}\ \R^N.$$
In the previous section $\alpha=\alpha_2$.
In this section, we would like to provide numerical evidence that
$\alpha_N > 0$ for the $L^2$-critical exponent, i.e.
\begin{equation}
  \label{eq:integUW>0}
  p = 1 + \frac{4}{N}
  \qquad\Rightarrow\qquad
  \int_0^\infty U(r)  S(r) r^{N-1}  \intd  r > 0.
\end{equation}
This was proved in \cite[Remark~3.5]{ppvv} for $N = 1$.  Here we
numerically estimate the integral in \eqref{eq:integUW>0} for larger
values of $N$.  To do this, $U(0)$ is estimated using a bisection
procedure to find the right initial condition that lies between the
set of $U(0)$ such that $\forall r > 0,\linebreak[2]\ U(r) > 0$ and
those such that $U$ has at least one root.  To determine $S(0)$, we
impose that $S(r_0) = 0$ for a ``large'' $r_0$.  Such $S(0)$ is easy
to compute since $S(r_0)$ is an affine function of $S(0)$.  On
Fig.~\ref{fig:integUW_1-4}, you can see the result of these
computations as the graphs of the functions
$p \mapsto \int_0^\infty U S r^{N-1}  \intd  r$ for
$N \in \{1,\dotsc, 8\}$.  The large dot on each curve indicates the
point of the graph for $p = 1 + 4/N$.  These provide clear evidence
that \eqref{eq:integUW>0} holds.

From these graphs, we also conjecture that
\begin{equation*}
  \alpha_N \to 0
  \quad\text{as } p \to 2^* - 1 = \frac{N+2}{N-2}.
\end{equation*}

\begin{figure}[ht]
  \centering
  \begin{tikzpicture}[x=15mm, y=5mm]
    \newcommand{\xmin}{1.3}
    \draw[->] (\xmin, 0) -- (5.3, 0) node[below]{$p$};
    \draw[dotted, thick] (\xmin, 6.53) -- (\xmin, 7.4);
    \draw (\xmin, -18mm) -- (\xmin, 6.5);
    \foreach \x in {2, 3, 4, 5}{
      \draw (\x, 0) -- +(0, 3pt) -- +(0, -3pt) node[below]{$\scriptstyle\x$};
    }
    \foreach \y in {-3, -2, -1, 0, 1, 2, 3, 4, 5, 6}{
      \draw (\xmin, \y) -- +(3pt, 0) -- +(-3pt, 0)
      node[left]{$\scriptstyle\y$};
    }
    \begin{scope}
      \foreach \n in {1, 2, 3, 4}{
        \draw[color=n\n, line width=1pt] (4, 8\baselineskip)
        ++(0, \n\baselineskip) -- +(1em, 0) node[right]{$N = \n$};
      }
      \clip (\xmin, -18mm) rectangle (5.1, 6.5);
      \foreach \n in {1, 2, 3, 4}{
        \draw[color=n\n, line width=1pt] plot file{\gr{integ_UW_N\n.dat}};
      }
      \foreach \n/\p/\i in {1/5/0.41922325133441757,
        2/3/1.1047799217791177, 3/2.33333/4.403048042874518}{
        \fill[color=n\n] (\p, \i) circle(2pt);
      }
    \end{scope}
    \begin{scope}[color=n4, dotted, thick]
      \draw (1.68, 6.4) -- (1.7, 7.5);
      \draw (2.3, 6.45) -- (2.04, 7.5);
    \end{scope}
    \begin{scope}[yshift=18mm, y=1mm]
      \draw[->] (\xmin, 19) -- (\xmin, 40);
      \foreach \y in {20, 25, 30, 35}{
        \draw (\xmin, \y) -- +(3pt, 0) -- +(-3pt, 0)
        node[left]{$\scriptstyle\y$};
      }
      \fill[color=n4] (2, 23.516225342983404) circle(2pt);
      \clip (\xmin, 19) rectangle (5, 38);
      \draw[color=n4, line width=1pt] plot file{\gr{integ_UW_N4.dat}};
    \end{scope}
    \begin{scope}[xshift=0.2\linewidth, x=55mm, y=4mm]
      \draw[->] (\xmin, 0) -- (2.35, 0) node[below]{$p$};
      \foreach \x in {1.4, 1.6, 1.8, 2, 2.2}{
        \draw (\x, 0) -- +(0, 3pt) -- +(0, -3pt) node[below]{$\scriptstyle\x$};
      }
      \draw[dotted, thick] (\xmin, 3.6) -- +(0, 2.6ex);
      \draw (\xmin, -18mm) -- (\xmin, 3.5);
      \foreach \y in {-4, -3, -2, -1, 1, 2, 3}{
        \draw (\xmin, \y) +(3pt, 0) -- +(-3pt, 0)
        node[left]{$\scriptstyle \y 000$};
      }
      \foreach \n in {5, 6, 7, 8}{
        \draw[color=n\n, line width=1pt] (2, 4\baselineskip)
        ++(0, \n\baselineskip) -- +(1em, 0) node[right]{$N = \n$};
      }

      \foreach \n/\p/\i in {5/1.8/0.1575407663466334,
        6/1.6666/1.269877871743919}{
        \fill[color=n\n] (\p, \i) circle(2pt);
      }
      \begin{scope}
        \clip (\xmin, -18mm) rectangle (2.35, 3.6);
        \foreach \n in {5, 6, 7, 8}{
          \draw[color=n\n, line width=1pt] plot file{\gr{integ_UW_N\n.dat}};
        }
      \end{scope}
      \begin{scope}[dotted, thick]
        \draw[color=n7] (1.447, 3.75) -- (1.45, 4.7);
        \draw[color=n7] (1.64, 3.7) -- (1.6, 4.7);
        \draw[color=n8] (1.4, 3.75) -- (1.4, 4.7);
        \draw[color=n8] (1.613, 3.75) -- (1.6, 4.7);
      \end{scope}
      \begin{scope}[yshift=11.2mm, y=1mm]
        \draw (\xmin, 8) -- (\xmin, 27);
        \draw[dotted, thick] (\xmin, 27) +(0, 0.5mm) -- +(0, 2.3ex);
        \foreach \y in {10, 20, 25}{
          \draw (\xmin, \y) +(3pt, 0) -- +(-3pt, 0)
          node[left]{$\scriptstyle \y 000$};
        }
        \fill[color=n7] (1.571, 11.964556975192801) circle(2pt);
        \begin{scope}[dotted, thick, color=n8]
          \draw (1.402, 26.3) -- (1.405, 31);
          \draw (1.56, 26.5) -- (1.53, 31);
        \end{scope}
        \clip (\xmin, 7.5) rectangle (2.35, 26);
        \foreach \n in {7, 8}{
          \draw[color=n\n, line width=1pt] plot file{\gr{integ_UW_N\n.dat}};
        }
      \end{scope}
      \begin{scope}[yshift=38mm, y=0.06mm]
        \draw[->] (\xmin, 70) -- (\xmin, 360);
        \foreach \y in {100, 200, 300}{
          \draw (\xmin, \y) +(3pt, 0) -- +(-3pt, 0)
          node[left]{$\scriptstyle \y 000$};
        }
        \fill[color=n8] (1.5, 129.02646069396114) circle(2pt);
        \clip (\xmin, 70) rectangle (2.35, 300);
        \draw[color=n8, line width=1pt] plot file{\gr{integ_UW_N8.dat}};
      \end{scope}
    \end{scope}
  \end{tikzpicture}
  \caption{Graphs of $p \mapsto \int_0^\infty U S r^{N-1} \intd r$
    for $N \in \{1,\dotsc, 8\}$.}
  \label{fig:integUW_1-4}
\end{figure}
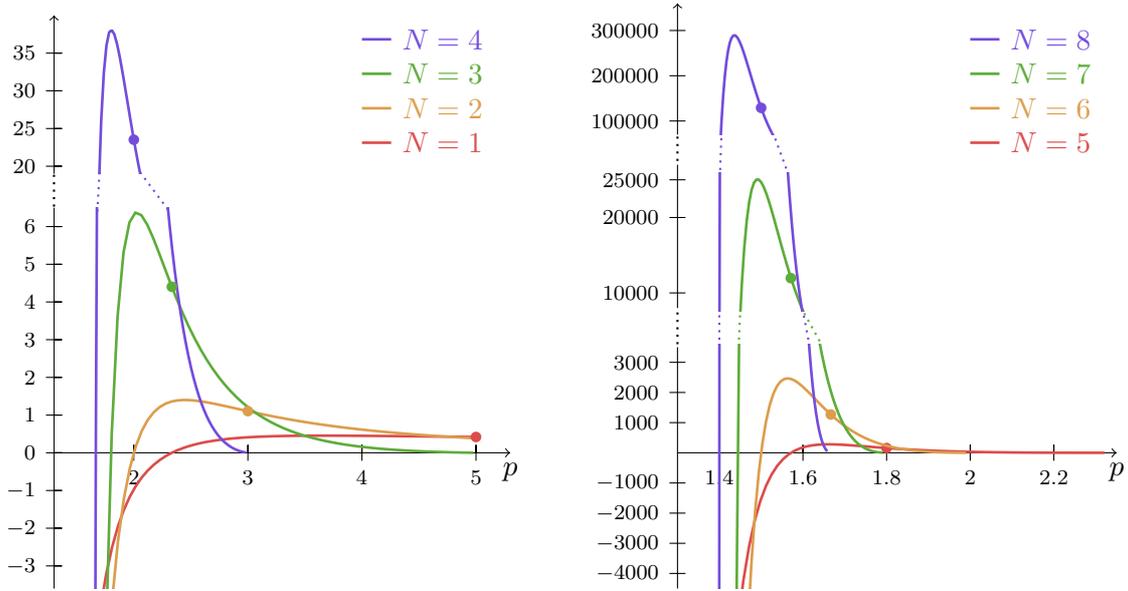

\appendix
\section{A non-degenerate result}\label{app}

In the following we use some ideas introduced in \cite{piso}.

\medskip

Let us consider the eigenvalue problem
\[
-\Delta \psi + \psi= \lambda U^2 \psi\quad \text{in}\ \R^N.
\]
The classical Fredholm alternative Theorem allows to claim that there exists a sequence of positive eigenvalues $\{\lambda_m\}_{m\in\mathbb N}$ with
\[
1=\lambda_1< 3=\lambda_2 < \lambda_3 < \dots < \lambda_m<
\lambda_{m+1} < \cdots \quad \text{and}\quad \lambda_m \to +\infty.
\]
The eigenspace associated to the first eigenvalue is a $1-$dimensional
space generated by the positive function $U.$ Moreover, it is well
known that eigenspace associated to the second eigenvalue is a
$N-$dimensional space generated by
$ \psi_i:={\partial U\over\partial x_i}$ for $  i=1,\dots,N$.

\medskip

We observe that system \eqref{linear-eq} can be rewritten as
\begin{equation}\label{sist3}
  -\Delta \phi_i + \phi_i - (2\beta_{ii}\sigma_i^2+1) U^2\phi_i
  - 2 \sigma_i \sum_{\substack{j=1\\ j\ne i}}^k \beta _{ij}\sigma_j U^2\phi_j
  = 0  \ \text{in}\ \R^N,\quad i=1,\dots,k.
\end{equation}
\begin{equation*}
  -\Delta \mathbf{v} + \mathbf{v}=U^2 {\mathcal M} \mathbf{v}
  \ \text{in}\ \R^N,
\end{equation*}
with
\begin{equation}\label{cccm}
  {\mathcal M}:=\ {\mathcal Id}+2\ {\mathcal C}
  \quad \text{and}\quad
  \mathcal{C}
  := \begin{pmatrix}
    \beta_{11}\sigma_1^2 & \beta_{12}\sigma_1\sigma_2  &\dots& \beta_{1k}\sigma_1\sigma_k\\
    \beta_{12}\sigma_1\sigma_2&   \beta_{22}\sigma_2^2 &  \dots&\beta_{2k}\sigma_2\sigma_k\\
    \vdots                   & \vdots    &  \ddots&\vdots\\
    \beta_{1k}\sigma_1\sigma_k&  \beta_{2k}\sigma_2\sigma_k &  \dots&  \beta_{kk}\sigma_k^2
  \end{pmatrix}.
\end{equation}
Let $\Lambda$ be an eigenvalue of $\mathcal M$ and $e$ an associated
eigenfunction, i.e.
\[
\mathcal M e=\Lambda e.
\]
It is useful to point out that $\Lambda_\ell$ is an eigenvalue of
$\mathcal M$ if and only if $\Theta_\ell:= (\Lambda_\ell-1)/ 2$ is an
eigenvalue of the matrix $\mathcal C$.  It is immediate to check that
$\Theta=1$ is an eigenvalue of $\mathcal C$ whose eigenvector is
$(\sigma_1,\dots,\sigma_k).$ We set $\Theta_1=1$, which implies
$\Lambda_1=3.$

\begin{proposition}\label{prop: base thm 2 general}
  Assume that, all the eigenvalues $\Lambda_2,\dots,\Lambda_k$ of
  $\mathcal{M}$ do not coincide with any of the eigenvalues
  $\{\lambda_m: m \in \mathbb N\}$, i.e.
  \begin{equation}\label{eigenvalues}
    \Lambda_\ell \not\in\{\lambda_1, \dots,\lambda_m,\dots\}
    \ \text{for any}\ \ell=2,\dots,k.
  \end{equation}
  Then the set of solutions to the linear system \eqref{sist3} is
  $N-$dimensional, and is generated by $\psi_i\mathfrak e_1,$ where
  $\mathfrak e_1=(\sigma_1,\dots,\sigma_k)\in \R^k$ is an eigenvector
  associated with $\Lambda_1=3$ and
  $ \psi_i:={\partial U\over\partial x_i}$ for $ i=1,\dots,N.$
\end{proposition}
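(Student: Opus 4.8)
The plan is to diagonalize the coupling matrix and thereby decouple the vector equation \eqref{sist3} into $k$ scalar equations of the form $-\Delta w + w = \Lambda U^2 w$, each of which is completely controlled by the spectral picture recalled at the beginning of this appendix.

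First I would record that $\mathcal{M} = \mathcal{Id} + 2\mathcal{C}$ is a real symmetric matrix (since $\beta_{ij} = \beta_{ji}$), hence orthogonally diagonalizable: there is an orthonormal basis $\mathfrak{e}_1, \dots, \mathfrak{e}_k$ of $\R^k$ with $\mathcal{M} \mathfrak{e}_\ell = \Lambda_\ell \mathfrak{e}_\ell$, where, as already observed, $\Lambda_1 = 3$ is associated with $\mathfrak{e}_1 = (\sigma_1, \dots, \sigma_k)$ (suitably normalized). Note that hypothesis \eqref{eigenvalues} forces $\Lambda_\ell \ne 3 = \lambda_2$ for $\ell = 2, \dots, k$, so the eigenvalue $\Lambda_1 = 3$ has multiplicity one and $\mathfrak{e}_1$ spans its eigenspace.

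Next, given any $\mathbf{v} = (v_1, \dots, v_k) \in H$ solving \eqref{sist3} (equivalently the vector equation $-\Delta \mathbf{v} + \mathbf{v} = U^2 \mathcal{M} \mathbf{v}$), I would expand it pointwise in the eigenbasis: $\mathbf{v}(x) = \sum_{\ell=1}^k w_\ell(x) \mathfrak{e}_\ell$ with $w_\ell(x) := \sum_{i=1}^k v_i(x)(\mathfrak{e}_\ell)_i \in H^1(\R^N)$. Because the $\mathfrak{e}_\ell$ are constant vectors, applying $-\Delta + 1$ componentwise and using $\mathcal{M} \mathfrak{e}_\ell = \Lambda_\ell \mathfrak{e}_\ell$ turns the vector equation into $\sum_{\ell=1}^k \bigl(-\Delta w_\ell + w_\ell - \Lambda_\ell U^2 w_\ell\bigr)\mathfrak{e}_\ell = 0$; since $\{\mathfrak{e}_\ell\}$ is a basis of $\R^k$, this is equivalent to the $k$ uncoupled scalar equations $-\Delta w_\ell + w_\ell = \Lambda_\ell U^2 w_\ell$ in $\R^N$, for $\ell = 1, \dots, k$.

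Finally I would read off the solutions from the spectral facts recalled above. For $\ell \ge 2$, $\Lambda_\ell$ is not among the eigenvalues $\{\lambda_m\}$ of $-\Delta \psi + \psi = \lambda U^2 \psi$ by \eqref{eigenvalues}, so $w_\ell \equiv 0$. For $\ell = 1$, $\Lambda_1 = 3 = \lambda_2$, whose eigenspace is $N$-dimensional and spanned by $\psi_i = \partial U/\partial x_i$, $i = 1, \dots, N$; hence $w_1 \in \spanned\{\psi_1, \dots, \psi_N\}$. Consequently $\mathbf{v} = w_1 \mathfrak{e}_1 \in \spanned\{\psi_i \mathfrak{e}_1 : i = 1, \dots, N\}$, and conversely each $\psi_i \mathfrak{e}_1$ solves \eqref{sist3} because $\psi_i$ solves $-\Delta \psi_i + \psi_i = 3 U^2 \psi_i$ and $\mathcal{M} \mathfrak{e}_1 = 3 \mathfrak{e}_1$. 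Since the $\psi_i$ are linearly independent, the solution set is exactly $N$-dimensional, as claimed. The only step requiring genuine care is the invocation of the Fredholm/spectral characterization ensuring that $\Lambda \notin \{\lambda_m\}$ forces the scalar equation to have only the trivial $H^1$ solution; but this is precisely the content of the eigenvalue discussion opening the appendix (and ultimately follows from compactness of $w \mapsto U(-\Delta+1)^{-1}(U w)$ on $L^2(\R^N)$), so no essential obstacle remains.
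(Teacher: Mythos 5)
Your proof is correct and follows essentially the same route as the paper: both arguments exploit the symmetry of $\mathcal M$ to project the system onto its eigenvectors, reducing \eqref{sist3} to decoupled scalar equations $-\Delta w_\ell + w_\ell = \Lambda_\ell U^2 w_\ell$, killing the components with $\ell\ge 2$ via \eqref{eigenvalues} and identifying the $\ell=1$ component with the $N$-dimensional eigenspace of $\lambda_2=3$. Your version is just slightly more explicit about the orthonormal diagonalization and the simplicity of $\Lambda_1=3$, where the paper instead tests the equation against each $\mathfrak e_\ell$ and invokes orthogonality of eigenvectors.
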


\begin{proof}
  Let $\Lambda_\ell$ be an eigenvalue of the matrix $\mathcal M$ and
  let $\mathfrak e_\ell\in\R^k$ be an associated eigenvector.  We
  multiply \eqref{sist3} by $\mathfrak e_\ell$ and taking into account
  the symmetry of the matrix $\mathcal M$ we get
  \[
    -\Delta (\mathfrak e_\ell\cdot\mathbf{v})
    + \mathfrak e_\ell\cdot\mathbf{v}
    =\Lambda_\ell\ U^2 (\mathfrak e_\ell\cdot \mathbf{v})\ \text{in}\ \R^N.
  \]
  Since $\Lambda_\ell \neq \lambda_m$ for every $m$, we deduce that
  \begin{equation*}
    \mathfrak  e_\ell\cdot\mathbf{v}=0
    \quad \text{for any}\ \ell=2,\dots,k,
  \end{equation*}
  which implies (by the orthogonality of eigenvectors associated to
  different eigenvalues) that
  $$\mathbf{v}=\psi(x) \mathfrak e_1
  \ \text{for some function $\psi$ such that}\
  -\Delta \psi + \psi=3 U^2 \psi\ \text{in}\ \R^N$$
  and the claim follows.
\end{proof}

A first consequence is the following example.

\begin{example}\label{ex1}
  Let  $k=2.$ The system \eqref{limsps} has a non-degenerate synchronized solution if
  \begin{equation}\label{esempio1}
  \hbox{either}\ -\sqrt{\mu_1 \mu_2} <\beta < \min\{\mu_1,\mu_2\}\ \hbox{or}\
  \beta> \max\{\mu_1,\mu_2\},
  \end{equation}
  where $\mu_i:=\beta_{ii}$ and $\beta:=\beta_{12}=\beta_{21}.$\end{example}

\begin{proof} First of all, we observe that in this case system \eqref{ccc} reduces to
  \begin{equation}\label{systci}
    \begin{cases}
      \mu_1 \sigma_1^2 + \beta \sigma_2^2 = 1, \\
      \beta \sigma_1^2 + \mu_2 \sigma_2^2 = 1,
    \end{cases}
  \end{equation}
 which
  admits the solution
  \begin{equation}\label{def c_i}
    \sigma_1^2:= \frac{\beta-\mu_2}{\beta^2 - \mu_1 \mu_2},
    \quad
    \sigma_2^2:= \frac{\beta-\mu_1}{\beta^2 - \mu_1 \mu_2}
  \end{equation}
  if \eqref{esempio1} holds true.  \\
  Next, to prove that it is non-degenerate, we apply Proposition \ref{prop: base thm 2 general} showing that
  assumption \eqref{eigenvalues} holds.
  The matrix $M:= (\alpha_{ij})_{i,j=1,2}$ in \eqref{cccm} reduces to
  \begin{equation}\label{def alpha}
    \alpha_{11} :=3\mu_1 \sigma_1^2+\beta \sigma_2^2,\quad
    \alpha_{12} = \alpha_{21} := 2\beta \sigma_1 \sigma_2, \quad
    \alpha_{22}:=3\mu_2\sigma_2^2+\beta \sigma_1^2.
  \end{equation}
  Its  eigenvalues are
  \begin{equation}\label{def lambda i}
    \Lambda_1 = \frac{\alpha_{11}+\alpha_{22}
      + \sqrt{(\alpha_{11}-\alpha_{22})^2+4\alpha_{12}^2}}{2},\quad
    \Lambda_2 = \frac{\alpha_{11}+\alpha_{22}
      - \sqrt{(\alpha_{11}-\alpha_{22})^2 + 4\alpha_{12}^2}}{2}.
  \end{equation}
  Using the definition of $\alpha_{ij}$ and that of $\sigma_1,\sigma_2$,
  it is not difficult to check that
  \begin{equation*}
    \Lambda_{2}
    = \frac{6-2\beta (\sigma_1^2+\sigma_2^2)
      - 2\beta (\sigma_1^2+\sigma_2^2)}2
    = 3-2\beta (\sigma_1^2+\sigma_2^2)
  \end{equation*}
  and
  \begin{equation*}
    \Lambda_{1}
    = \frac{6-2\beta (\sigma_1^2+\sigma_2^2) + 2\beta (\sigma_1^2+\sigma_2^2)}2
    =3.
  \end{equation*}
  In particular, $\Lambda_2<\Lambda_1 = 3$ for $\beta>0$. Moreover, a direct
  computation shows that if either $\beta <\min\{\mu_1,\mu_2\}$, or
  $\beta >\max\{\mu_1,\mu_2\}$, then $\Lambda_1 \neq 1.$ Finally, if
  $\Lambda_1=1,$ then it is immediate to check that
  \begin{equation*}
    1 = 3-2\beta \frac{2\beta-\mu_1-\mu_2}{\beta^2 -\mu_1 \mu_2},
  \end{equation*}
  which implies either $\beta = \mu_1$, or $\beta=\mu_2$ which is not
  possible.
\end{proof}

\begin{proposition}\label{prop: on hp 2}
  Suppose that the matrix
  $\mathcal{B} := (\beta_{ij})_{1\le i,j\le k}$ is invertible and has
  only positive elements.  Then the linearized systems \eqref{sist3}
  has a $N$-dimensional set of solutions.
\end{proposition}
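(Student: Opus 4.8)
The plan is to verify that the hypothesis~\eqref{eigenvalues} of Proposition~\ref{prop: base thm 2 general} holds and then apply that proposition verbatim. Recall from the discussion preceding it that $\Lambda_\ell$ is an eigenvalue of $\mathcal M = \mathcal{Id} + 2\mathcal C$ if and only if $\Theta_\ell := (\Lambda_\ell - 1)/2$ is an eigenvalue of $\mathcal C$, and that $\Theta_1 = 1$ (so $\Lambda_1 = 3$). Since the spectrum of $-\Delta + 1 - \lambda U^2$ is $\{\lambda_m\}_{m\in\mathbb N}\subset\{1\}\cup[3,+\infty)$, with $\lambda_1 = 1$ and $\lambda_2 = 3$, it is enough to show that every eigenvalue $\Theta_\ell$ of $\mathcal C$ with $\ell\ge 2$ satisfies both $\Theta_\ell\ne 0$ and $\Theta_\ell < 1$; equivalently $\Lambda_\ell = 1 + 2\Theta_\ell$ lies in $(-1,3)\setminus\{1\}$, a set disjoint from $\{\lambda_m : m\in\mathbb N\}$.

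First I would observe that, with $D := \operatorname{diag}(\sigma_1,\dots,\sigma_k)$, the matrix $\mathcal C$ of~\eqref{cccm} factors as $\mathcal C = D\,\mathcal B\,D$. Because all $\sigma_i > 0$ and, by hypothesis, all $\beta_{ij} > 0$, the matrix $\mathcal C$ has strictly positive entries; it is moreover symmetric, so all its eigenvalues are real. Since $\mathcal B$ and $D$ are invertible, so is $\mathcal C$, which rules out $\Theta_\ell = 0$ for every $\ell$, i.e.\ no $\Lambda_\ell$ equals $\lambda_1 = 1$. Finally, a one-line computation using~\eqref{ccc}, namely $\sum_{j=1}^k \beta_{ij}\sigma_i\sigma_j\cdot\sigma_j = \sigma_i\sum_{j=1}^k \beta_{ij}\sigma_j^2 = \sigma_i$, shows that $(\sigma_1,\dots,\sigma_k)$ is a strictly positive eigenvector of $\mathcal C$ with eigenvalue $1 = \Theta_1$.

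I would then invoke the Perron--Frobenius theorem for the strictly positive matrix $\mathcal C$: its spectral radius $\rho(\mathcal C)$ is a simple eigenvalue, it is the unique eigenvalue possessing a positive eigenvector, and every other eigenvalue is strictly smaller in modulus. Since $(\sigma_1,\dots,\sigma_k)$ is a positive eigenvector for the eigenvalue $1$, this forces $\rho(\mathcal C) = 1 = \Theta_1$, whence $|\Theta_\ell| < 1$ for all $\ell = 2,\dots,k$. Together with $\Theta_\ell\ne 0$ this gives $\Lambda_\ell\in(-1,3)\setminus\{1\}$ for $\ell = 2,\dots,k$, which is precisely~\eqref{eigenvalues}; Proposition~\ref{prop: base thm 2 general} then yields that the solution set of~\eqref{sist3} is $N$-dimensional. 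The step needing the most care is the use of Perron--Frobenius: the conclusion hinges on using the \emph{uniqueness} of the positive eigenvector to identify the explicitly known eigenvalue $1$ with the spectral radius of $\mathcal C$, since it is this identification — not any direct estimate on the remaining eigenvalues — that confines them to $\{\,|\Theta| < 1\,\}$, and hence pushes the corresponding $\Lambda_\ell$ below $\lambda_2 = 3$.
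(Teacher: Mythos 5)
Your proof is correct and follows essentially the same route as the paper's: both verify hypothesis \eqref{eigenvalues} by applying Perron--Frobenius to the positive matrix $\mathcal C$ (whose positive eigenvector $(\sigma_1,\dots,\sigma_k)$ has eigenvalue $1$) and using the invertibility of $\mathcal B$ to exclude $\Theta_\ell=0$, so that $\Lambda_\ell\in(-1,3)\setminus\{1\}$ for $\ell\ge 2$. Your factorization $\mathcal C = D\,\mathcal B\,D$ and the explicit identification of $1$ with the spectral radius merely make precise two steps the paper leaves implicit.
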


\begin{proof}
  As observed above, we have to prove that if $(\beta_{ij})$ is
  invertible and has positive entries, then the set of solutions to
  \eqref{sist3} is $N$-dimensional. By Proposition \ref{prop: base thm
    2 general}, this amounts to show that if $(\beta_{ij})$ is
  invertible and has positive entries, then the eigenvalues
  $\Lambda_2,\dots,\Lambda_k$ of $\mathcal{M}$ are different from
  $\lambda_1=1$, $\lambda_2 = 3$, $\lambda_m >3$.

  Let us argue in terms of the matrix $\mathcal{C}$. By assumption,
  $\mathcal C$ has positive entries. Therefore by Perron-Frobenius
  Theorem we deduce that the eigenvalue $\Theta_1=1$, which is
  associated to the eigenvector of positive elements
  $(\sigma_1,\dots,\sigma_k)$, is simple, and any other eigenvalue
  $\Theta_\ell$ satisfies $|\Theta_\ell|<1$.  Moreover, $0$ is not an
  eigenvalue of the matrix $\mathcal C$, since a straightforward
  computation shows that
  \begin{equation*}
    \det \mathcal C
    = -(\sigma_1^2\cdot\dots\cdot \sigma_k^2) \det(\beta_{ij})
    \ne 0
  \end{equation*}
  being $(\beta_{ij})$ invertible.  Therefore, $\Lambda_1=3$ is a
  simple eigenvalue, and we have that both $-1<\Lambda_\ell<3$ and
  $\Lambda_\ell\not =1$ for any $\ell=2,\dots,k$. This completes the
  proof.
\end{proof}

\begin{example} \label{ex2}
  Let  $k\ge2.$ The system \eqref{limsps} has a non-degenerate synchronized solution if
  \begin{equation}\label{esempio2}
 0<\mu_1<\dots<\mu_k,\ \beta:=\beta_{ij}\  \hbox{for every}\ i\not=j \ \hbox{and}\ \beta\ \hbox{is large enough,}
  \end{equation}
  where  $\mu_i:= \beta_{ii}.$
\end{example}

\begin{proof}  The  existence  of a synchronized solution is
proved in \cite{ba-2013}, once we choose
  $0<\mu_1<\dots<\mu_k$ and $\beta_{ij}=\beta$ for every  $i\not=j $ with
  $\beta>\mu_k.$ Indeed it is easy to check that
  \begin{equation*}
    \sigma_i:=\left[(\mu_i-\beta)
        \Biggl(1+\beta\sum_{j=1}^k \frac{1}{\mu_j -\beta}\Biggr)
      \right]^{-1/2}
    \quad \text{for}\  i=1,\dots,k
  \end{equation*}
  is a solution to system \eqref{ccc}. To prove that it is non-degenerate, we apply Proposition \ref{prop: on hp 2}. Indeed   in this case
  \begin{equation*}
    \mathcal{B}
    = \begin{pmatrix}
      \mu_1 &\beta&\dots&\beta\\
      \beta &\mu_2&\dots&\beta\\
      \vdots&\vdots&\ddots&\vdots\\
      \beta &\beta&\dots&\mu_k\\
    \end{pmatrix},
  \end{equation*}
 whose  elements are strictly positive. It is also easy to check   that it is invertible if $\beta$ is large, since
  \begin{equation*}
    \det\mathcal{B}
    \sim\beta^k\det\begin{pmatrix}
      0&1&\dots&1\\
      1&0&\dots&1\\
      \vdots&\vdots&\ddots&\vdots\\
      1&1&\dots&0\\
    \end{pmatrix}
    = \beta^k(-1)^{k-1}(k-1)\quad \text{as}\ \beta\to+\infty.
  \end{equation*}
\end{proof}

\bibliography{normalized}
\bibliographystyle{abbrv}

\end{document}